\newtheorem{thm}{Theorem}[section]
\newtheorem{lem}[thm]{Lemma}
\newtheorem{prop}[thm]{Proposition}
\newtheorem{cor}[thm]{Corollary}
\theoremstyle{definition}
\newtheorem{defn}[thm]{Definition}
\newtheorem{rem}[thm]{Remark}
\numberwithin{equation}{section}
\renewcommand{\Cap}{\text{cap}}
\newcommand{\tor}{\textup{tor}}
\newcommand{\R}{\mathbb{R}}
\newcommand{\diam}{\operatorname{diam}}
\newcommand{\dist}{\operatorname{dist}}
\newcommand{\loc}{\operatorname{loc}}
\renewcommand{\L}{\operatorname{L}} 
\renewcommand{\d}{\, \mathrm{d}} 
\renewcommand{\limsup}
{\textup{limsup}}
\renewcommand{\liminf}
{\textup{liminf}}
\def\Xint#1{\mathchoice
{\XXint\dipslaystyle\textstyle{#1}}%
{\XXint\textstyle\scriptstyle{#1}}%
{\XXint\scriptstyle\scriptscriptstyle{#1}}%
{\XXint\scriptscriptstyle%
\scriptscriptstyle{#1}}%
\!\int}
\def\XXint#1#2#3{{\setbox0=\hbox{$#1{#2#3}{%
\int}$ }
\vcenter{\hbox{$#2#3$ }}\kern-.6\wd0}}
\def\overlineint{\,\Xint -} 
\def\overlineiint{\overlineint_{} \kern-.4em \overlineint}
\def\overlineiiint{\overlineiint_{} \kern-.4em \overlineint}
\renewcommand{\iint}{\int_{}\kern-.34em \int} 
\renewcommand{\iiint}{\iint_{}\kern-.34em \int} 
\newcommand{\note}[1]{{\color{black} #1}}
\title{Boundary behavior of solutions to fractional $p$-Laplacian equation}
\author{Alireza Ataei}
\email{alireza.ataei@math.uu.se}
\address{Department of Mathematics, Uppsala University, S-751 06 Uppsala,
Sweden}
\keywords{$(s,p)$-eigenvalue problem, Hopf's Lemma, fractional $p$-Laplacian}
\subjclass[2010]{Primary: 35R11; Secondary: 35P30, 35B51}
\thanks{}
\date{\today}
\begin{document}

\maketitle

\begin{abstract}
  In this work, a generalized Hopf's lemma and a global boundary Harnack inequality are proved for solutions to fractional $p$-Laplacian equations. Then, the isolation of the first $(s,p)$-eigenvalue is shown in bounded open sets satisfying the Wiener criterion. 
\end{abstract}

\section{Introduction}
We maintain the previous work in \cite{AA} and prove the boundary properties of solutions to fractional $p$-Laplacian equations.

The first result is a generalized Hopf's Lemma. To bring the result, we need the notion of Wiener regular boundaries, $\delta$-neighbourhoods, and the torsion function. Let $\Omega \subset \R^n$ be a bounded open set, $p>1$, and $0<s<1$.  Denote $(-\Delta_p)^s$ as the $s$-fractional $p$-Laplacian, which satisfies \begin{align*}
  (-\Delta_p)^s u(x) = 2\lim_{\epsilon \to 0}  \int_{\R^n \setminus B(x,\epsilon)} \frac{|u(x)-u(y)|^{p-2} (u(x)-u(y))}{|x-y|^{n+ps}} \, \d y,
\end{align*} pointwise for $x \in \R^n$. We say that $\Omega$ has regular boundary for the $s$-fractional $p$-Laplacian if for every $f \in \L^{\infty}(\Omega),g \in C(\R^n)$, and every weak solution $u$ of $(-\Delta_p)^s u = f$ in $\Omega$ with $u=g$ in $\R^n \setminus \Omega$, we have $u \in C(\R^n),$ see Section \ref{sec:wiener} for more details. Now, assume that $\Omega \subset \R^n$ is a bounded open set, which has Wiener regular boundary for the
$s$-fractional $p$-Laplacian. For $\delta>0$, the $\delta$-nieghbourhood of $\Omega$, denoted by $\Omega_{\delta}$, is defined by $\{x \in \R^n: \dist(x,\overline \Omega)< \delta\}.$ The torsion function $u_{\tor} \in  \L^{p-1}_{ps}(\R^n) \cap C(\R^n)$ satisfies
\begin{align*}
u_{\tor} = 0, \quad &\text{in } \R^n \setminus \Omega,\\
    (-\Delta_p)^{s} u_{\tor} = 1, \quad &\text{in } \Omega,
\end{align*}
in the viscosity sense, see Proposition \ref{prop:existence} and Proposition \ref{prop:equivalenceweakvisco} for the existence of $u_{\tor}$. We say that $K \Subset \Omega$ if $\overline{K} \subset \Omega.$

\begin{lem}
 \label{lem:Hopf's lemma}
    Let $u \in \L^{p-1}_{ps}(\R^n) \cap C(\overline \Omega_{\delta})$ be a non-negative function for a $\delta >0$ and $K \Subset \Omega$. Assume that $(-\Delta_p)^s u \geq f$ in $\Omega$ in the viscosity sense, where $f \in C(\Omega)$ satisfies 
\begin{align*}
 f(x_0) > - 2 \int_{\R^n} \frac{u^{p-1}(y)}{|x_0-y|^{n+ps}} \, \d y, \quad &\text{if } x_0 \in \Omega, \, u(x_0)=0,\\
 \underset{\Omega \ni x \to x_0}{\limsup} f(x) \geq - 2 \int_{K} \frac{u^{p-1}(y)}{|x_0 - y|^{n+ps}} \, \d y, \quad & \text{if } x_0 \in \partial \Omega, u(x_0) =0.
\end{align*}
Then, $u>0$ in $\Omega$ and
\begin{align*}
   u \geq C u_{\tor}, \quad \text{in } \Omega,
\end{align*}
    for a constant $C>0.$

\end{lem}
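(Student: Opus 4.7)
The plan is to establish the two conclusions in sequence.

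For the positivity $u>0$ in $\Omega$, I argue by a strong minimum principle. Suppose to the contrary that $u(x_0)=0$ for some $x_0\in\Omega$. Since $u\ge 0$, the zero function touches $u$ from below at $x_0$, and the fractional $p$-Laplacian of $u$ at that point collapses (no principal value is needed) to
\[
(-\Delta_p)^s u(x_0)=-\int_{\R^n}\frac{u^{p-1}(y)}{|x_0-y|^{n+ps}}\d y.
\]
The viscosity inequality $(-\Delta_p)^s u(x_0)\ge f(x_0)$, combined with the strict pointwise hypothesis on $f$ at interior zeros of $u$, yields an immediate contradiction, so $u>0$ in $\Omega$.

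Given positivity, continuity of $u$ on $\overline{\Omega_\delta}$ furnishes $\eta:=\min_{\overline K}u>0$. Choose $C>0$ so small that $C\|u_\tor\|_{\L^\infty(\Omega)}\le\eta$; then $u\ge Cu_\tor$ on $K\cup(\R^n\setminus\Omega)$, so the remaining task is to prove $u\ge Cu_\tor$ on $\Omega\setminus\overline K$. The natural approach is a viscosity comparison: if $m:=\inf_{\Omega}(u-Cu_\tor)<0$, then since $u-Cu_\tor=u\ge 0$ on $\partial\Omega$, the infimum must be attained at some interior point $x^\ast\in\Omega\setminus\overline K$ or approached along a sequence tending to $\partial\Omega$. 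At an interior minimizer $x^\ast$, the function $\tilde v:=Cu_\tor+m$ touches $u$ from below globally and the viscosity test evaluates
\[
(-\Delta_p)^s\tilde v(x^\ast)=C^{p-1}(-\Delta_p)^s u_\tor(x^\ast)=C^{p-1}\ge f(x^\ast);
\]
at a boundary accumulation point, the $\limsup$ hypothesis on $f$ should enter through the sequence.

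The main obstacle is that the bound $C^{p-1}\ge f(x^\ast)$ alone is not contradictory: the pointwise hypothesis on $f$ is active only at zeros of $u$, while the positivity established above gives $u(x^\ast)>0$. The plan to circumvent this is to refine the barrier by a correction term whose fractional $p$-Laplacian reproduces the negative nonlocal quantity $-\int_K u^{p-1}(y)|x-y|^{-n-ps}\d y$ appearing in the boundary hypothesis; a natural candidate is a combination of $Cu_\tor$ with $u\chi_K$, or the solution of a torsion-type Dirichlet problem with that source. With such a corrected test function, the viscosity inequality at the infimum point should conflict either with the pointwise hypothesis (if the minimum lies at an interior zero, already ruled out) or, after passing to a subsequential limit, with the $\limsup$ hypothesis at a boundary point where $u\to 0$. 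Designing this corrected barrier so that the desired $Cu_\tor$ lower bound is still retained is the technical heart of the argument.
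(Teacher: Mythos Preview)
Your positivity argument has a technical gap: the constant function $\phi\equiv 0$ is \emph{not} an admissible test function in the viscosity definition used here. When $1<p\le\frac{2}{2-s}$ one needs $\phi\in C^2_\beta$ with an isolated critical point at $x_0$, and a constant function has every point critical. The paper handles this (Lemma~\ref{lem:maxprinciple}) by touching from below with $u(x_0)-|x_0-y|^\beta$ for $\beta>\frac{ps}{p-1}$ and letting the radius shrink; the idea you describe is right but the test function must be chosen carefully.

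The real gap is in the second step. Your proposed fix --- augment $Cu_{\tor}$ by a correction like $u\chi_K$ so that its fractional $p$-Laplacian picks up the term $-\int_K u^{p-1}|x-y|^{-n-ps}\,\d y$ --- is exactly the strategy that breaks down for the \emph{nonlinear} operator: $(-\Delta_p)^s$ does not split additively, and scaling the barrier by $C$ rescales the equation by $C^{p-1}$, so you cannot simultaneously keep the barrier below $u$ on $K$ and control the sign of $(-\Delta_p)^s$ of the corrected barrier. This is precisely the failure the paper points out in its discussion of \cite{DQ} in the introduction. The paper does not use a barrier at all for this step. Instead it proves a general comparison lemma (Lemma~\ref{thm:boundbelow}) by the variable-doubling technique from viscosity theory: one studies
\[
\sup_{(x,y)}\,\frac{u_{\tor}(x)}{C}-u(y)-\frac{|x-y|^\beta}{\epsilon},
\]
locates maximizers $(x_\epsilon,y_\epsilon)$, applies the viscosity inequalities at \emph{both} points with the penalization as test function, and shows the maximizers must drift to $\partial\Omega$ where the $\limsup$ hypothesis on $f$ yields the contradiction. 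This replaces linearity by the standard Jensen--Ishii style machinery, and is the ``technical heart'' you allude to but do not supply.
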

Setting $f = g(u),$ we arrive at the following result:
\begin{cor}
\label{cor:key}
     Let $u \in \L^{p-1}_{ps}(\R^n) \cap C(\overline \Omega_{\delta})$ be a non-negative viscosity supersolution of \note{$(-\Delta_p)^s u = g(u)$} in $\Omega$, where $\delta>0, g \in C([0,\infty)), g(0) =0.$ Then, either $u=0$ a.e. in $\R^n$, or $u>0$ in $\Omega$ and $u \geq C u_{\tor}$ for a constant $C>0.$
\end{cor}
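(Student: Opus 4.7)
The plan is to reduce the corollary to Lemma~\ref{lem:Hopf's lemma} via a dichotomy. I would set $f := g \circ u$ and split into two cases matching the two alternatives of the conclusion. If $u = 0$ a.e.\ in $\R^n$ we are already done. Otherwise, $u \geq 0$ is nontrivial on a set of positive Lebesgue measure, so strict positivity of the kernel gives, for every $x_0 \in \R^n$,
\begin{equation*}
\int_{\R^n} \frac{u^{p-1}(y)}{|x_0-y|^{n+ps}} \, \d y \in (0,+\infty].
\end{equation*}
This is the structural fact that drives the rest of the argument.

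With this in hand, I would pick any $K \Subset \Omega$ and verify the two hypotheses on $f$ required by Lemma~\ref{lem:Hopf's lemma}. For the interior condition, at any $x_0 \in \Omega$ with $u(x_0) = 0$, continuity of $g$ together with $g(0) = 0$ yields $f(x_0) = 0$, and the displayed positivity then upgrades this to the strict inequality $f(x_0) > -\int_{\R^n} u^{p-1}(y)/|x_0-y|^{n+ps} \, \d y$. For the boundary condition, at $x_0 \in \partial\Omega$ with $u(x) \to 0$, continuity of $u$ on $\overline{\Omega_\delta}$ and of $g$ give $\limsup_{x \to x_0} f(x) = g(0) = 0$; the right-hand side $-\int_{K} u^{p-1}(y)/|x_0-y|^{n+ps} \, \d y$ is $\leq 0$ by non-negativity of $u$, so the required inequality is immediate.

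Lemma~\ref{lem:Hopf's lemma} then delivers both $u > 0$ in $\Omega$ and $u \geq C u_{\tor}$, which is exactly the second alternative. Since all the analytic heavy lifting is absorbed into the preceding lemma, I do not anticipate any real obstacle here; the only subtle point is recognising that the alternative "$u = 0$ a.e.\ in $\R^n$" is precisely the case in which the displayed integral vanishes, so the dichotomy matches exactly the boundary between applicability and non-applicability of Lemma~\ref{lem:Hopf's lemma}.
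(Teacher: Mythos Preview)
Your proposal is correct and follows exactly the route the paper takes: the paper derives Corollary~\ref{cor:key} directly from Lemma~\ref{lem:Hopf's lemma} by setting $f = g(u)$, and your argument spells out precisely why the two hypotheses of that lemma are met in the nontrivial case. There is nothing to add.
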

If $\Omega$ has a $C^{1,1}$ boundary, then, by \cite[Lem. 2.3]{IMS1}, $u_{\tor}(x) \geq C \dist(x,\partial \Omega)^s$ for $x \in \Omega$, where $C>0$ is a constant. Hence, the above result generalizes the previous versions of Hopf's lemma for the $s$-fractional $p$-Laplacian in \cite{CRS,GS,R,DQ,LZ,IMS2}.

We remark that we could not verify the argument in \cite[Lem. 4.1]{DQ}, which considers the case that $g(u) = c |u|^{p-2} u$ in a ball \note{$B \subset \R^n$} of radius $R,$ where $c \in C (\overline{\Omega} )$ is negative. To elaborate on the issue, the authors take the set $B_{\rho} :=\{x \in B: \dist(x,\partial B) < \rho\}$ and a compact \note{subset} $K \subset B \setminus B_{\rho}$, where $\rho$ is taken small enough such that $(-\Delta_p)^s \dist(x,\partial \Omega)^s \in \L^{\infty}(B_{\rho})$, see \cite[Thm. 2.3]{IMS}. \note{They choose} $\alpha$ large enough such that $(-\Delta_p)^s (d^s + \alpha 1_{K})$ \note{becomes} very small in compare to $c |u|^{p-1} u$ in $B_{\rho}$, where $d$ is the function $\dist(x,\partial B)$. Then, they consider $0<\epsilon<1$ small enough such that $\epsilon (R^s + \alpha) \leq \inf_{B \setminus B_{\rho}} u$ and define $v:= \epsilon (d^s + \alpha 1_{K}) $. Finally, they claim that, since $v \leq u$ in $\R^n \setminus B_{\rho}$ and $(-\Delta_p)^s v \leq (-\Delta_p)^s u $ in $B_{\rho}$, one can apply comparison principle to obtain $v \leq u$ in $B_{\rho}$. However, $(-\Delta_p)^s v = \epsilon^{p-1} (-\Delta_p)^s (d^s + \alpha 1_{K})$ and \note{the decrease of} $\epsilon>0$ \note{increases} the value of $(-\Delta_p)^s v$, since \note{$d^s + \alpha 1_{K} \leq c |u|^{p-1} u \leq 0$ by the maximum principle, see \cite[Thm. 1.2]{IMS}.} Hence, it is not clear that $(-\Delta_p)^s v$ remains below $(-\Delta_p)^s u$ in $B_{\rho}$.
 
Note that as it is mentioned in \cite{AA} and \cite[Rem. 2.8]{IMS2}, Corollary \ref{cor:key} does not hold for the local $p-$Laplacian, see \cite{PS,V} for the necessary assumptions on $g$ to have strong maximum property. Hence, the nonlocal property of fractional $p-$Laplacian plays a key role in the proof.

We observe that unlike \cite[Lem. 1.2]{AA}, to prove Lemma \ref{lem:Hopf's lemma}, we need a stronger continuity of $u$ around a neighborhood of $\overline \Omega$.

The second result is a global boundary Harnack theorem. We briefly mention that $V_g^{s,p}(\Omega|\R^n)$ is the fractional Sobolev space on $\R^n$ with the boundary value $g$ in the trace space $V^{s,p}(\Omega|\R^n)$, see Section \ref{sec:spaces} for more details. 
\begin{thm}
\label{thm:boundaryharnack}
    Let $\delta>0$, $u \in C(\overline \Omega_{\delta}) \cap V_{g_u}^{s,p}(\Omega|\R^n),v \in C(\overline \Omega_{\delta}) \cap V_{g_v}^{s,p}(\Omega|\R^n)$ satisfy  
    \begin{align*}
    u>0 , \,v>0 \quad & \text{in }  \Omega,\\
   0 \leq \frac{1}{B} g_v \leq g_u \leq B g_v \leq M, \quad &\text{in } \R^n \setminus \Omega,
    \end{align*}
  for $B>0, M \geq 0$, and
    \begin{align*}
   - 2 (\mathrm{diam}\, \Omega)^{-(n+ps)}\int_{K} u^{p-1}(y)\, \d y \leq (-\Delta_p)^s u \leq  1, \quad & \text{in } \Omega,\\
    -2 (\mathrm{diam}\, \Omega)^{-(n+ps)}\int_{K} v^{p-1}(y) \, \d y  \leq   (-\Delta_p)^s v \leq 1, \quad &\text{ in } \Omega,
    \end{align*}
    in the locally weak sense, where $K \Subset \Omega$. If either $u(x_0)\geq D , \; v(x_0)\geq D$ or $\|u\|_{\L^q(\Omega \setminus K)} \geq D , \;  \|v\|_{\L^q(\Omega \setminus K)}\geq D$ \note{for a fixed point $x_0 \in \Omega \setminus K$ and some constants} $D>0,1 \leq q  <\infty$, then
    \begin{align*}
       C_1 \leq \frac{u}{v} \leq C_2, \quad \text{in } \Omega,
    \end{align*} where $C_1, C_2$ are positive constants depending on $\Omega, \delta, K, n, s,p, D,B,M, x_0$ or $q$.
\end{thm}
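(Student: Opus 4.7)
\emph{Proof plan.} The strategy is to sandwich $u$ and $v$ between comparable multiples of a common profile and to handle near-boundary matching separately. First I would apply Lemma \ref{lem:Hopf's lemma} to both $u$ and $v$. The assumed lower bound on $(-\Delta_p)^s u$ is calibrated to match the integral condition of that lemma: at any $x_0 \in \overline\Omega$ and $y \in K$ one has $|x_0-y| \leq \mathrm{diam}\,\Omega$, hence
\[
\int_K \frac{u^{p-1}(y)}{|x_0-y|^{n+ps}} \, \d y \geq (\mathrm{diam}\,\Omega)^{-(n+ps)} \int_K u^{p-1}(y) \, \d y,
\]
which suffices to verify the required $\limsup$ assumption (the interior vanishing case is vacuous since $u>0$ in $\Omega$). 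After the standard passage between locally weak and viscosity formulations, Lemma \ref{lem:Hopf's lemma} delivers $u \geq c_u u_{\tor}$ and $v \geq c_v u_{\tor}$ in $\Omega$ for some positive constants. To make $c_u, c_v$ quantitative in terms of the stated data, I would use the nondegeneracy hypothesis: in the pointwise case $u(x_0) \geq D$, rescale $\tilde u := u/u(x_0)$ so that $(-\Delta_p)^s \tilde u = u(x_0)^{-(p-1)}(-\Delta_p)^s u$ satisfies a scale-invariant version of the same bounds, and reapply Hopf to the normalized problem; in the $\L^q$ case, continuity on a compact subset of $\Omega \setminus K$ extracts a point where $u$ is bounded below, reducing to the pointwise case.

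For the upper bound, I would exploit $(-\Delta_p)^s u \leq 1$ in $\Omega$ together with $u \leq M$ outside. Comparing $u$ against a supersolution $W$ built from $M$ and a large enough multiple of $u_{\tor}$ (so that $(-\Delta_p)^s W \geq 1$ in $\Omega$ and $W \geq M$ outside), the weak comparison principle gives $u \leq W \leq \bar M$ in $\Omega$, for a constant $\bar M$ depending only on $M,\Omega,n,s,p$; the analogous bound holds for $v$.

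Combining these bounds yields $u/v$ bounded on compact subsets of $\Omega$ where $u_{\tor}$ stays away from zero. Near $\partial\Omega$ I would split based on the boundary data. Where $g_u$ (equivalently $g_v$, by the comparability $B^{-1} g_v \leq g_u \leq B g_v$) is bounded below by a positive constant, continuity of $u,v$ on $\overline\Omega_\delta$ gives a uniform positive lower bound on $u$ and $v$ nearby, and the ratio is controlled by $\bar M$ divided by that constant. At the remaining points, where $g_u, g_v \to 0$, both $u$ and $v$ decay at the $u_{\tor}$ rate: Hopf supplies the lower direction, and for the upper direction I would construct a local barrier comparable to $u_{\tor}$ and apply comparison once more, using $(-\Delta_p)^s u \leq 1$ and the local vanishing of $g_u$. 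Assembling these estimates gives $C_1 \leq u/v \leq C_2$ in $\Omega$.

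The hard part will be precisely this last matching upper bound $u \leq C u_{\tor}$ near the vanishing part of $\partial\Omega$: it is a one-sided fractional $p$-Laplacian boundary Harnack estimate that is not implied by the global bound $u \leq \bar M$, and requires a careful local barrier construction using the upper bound on $(-\Delta_p)^s u$ and the structure of $u_{\tor}$.
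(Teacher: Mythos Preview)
Your plan has a genuine gap at exactly the point you flag as ``the hard part''. Proving $u \leq C\, u_{\tor}$ near the portion of $\partial\Omega$ where $g_u$ vanishes is not a side estimate one can patch in with a local barrier: it is itself a boundary Harnack inequality (for the pair $(u,u_{\tor})$), and for the nonlinear operator $(-\Delta_p)^s$ there is no off-the-shelf barrier construction that delivers it. The difficulty is structural: $(-\Delta_p)^s$ does not respect sums, so you cannot simply build $W = M + C u_{\tor}$ or a localized variant and compare; any barrier you write down must be a single function whose $s$-fractional $p$-Laplacian you can actually compute or bound, and it must match the boundary data $g_u$ (which need not vanish identically, only at some points). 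Your sketch ``construct a local barrier comparable to $u_{\tor}$ and apply comparison'' does not say what this barrier is, and the paper does not supply one either.

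The paper's proof avoids this circularity entirely by \emph{never} passing through $u_{\tor}$. Instead it argues by contradiction and compactness: assume the ratio $u/v$ is unbounded along a sequence of admissible pairs $(u_i,v_i)$, use Proposition~\ref{prop:upperbound} and interior H\"older regularity to extract locally uniform limits, and then invoke the key Lemma~\ref{thm:boundbelow}. That lemma compares $u_i$ directly to $v_i$ (not to a common profile) via a viscosity doubling-of-variables argument, exploiting the comparability $u_i \leq B v_i$ on $\R^n\setminus\Omega$ and the lower bound on $(-\Delta_p)^s v_i$ in a way that survives the nonlinearity. The nondegeneracy hypothesis on $D$ is used, together with Lemma~\ref{lem:newmaximal}, only to ensure the limit functions are strictly positive in $\Omega$, which is what Lemma~\ref{thm:boundbelow} requires. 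Your Hopf-based lower bounds and the global $L^\infty$ bound are correct ingredients, but they do not assemble into a proof without the direct comparison machinery of Lemma~\ref{thm:boundbelow}.
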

Up to the knowledge of the author, there are several results for the boundary Harnack theorem for the linear fractional Laplacian, see \cite{BFV,B,RS,AA,RT}, but there is none for the fractional $p-$Laplacian. \note{In the case of $g_u=0,v=u_{\tor}$ in Theorem \ref{thm:boundaryharnack}, we derive the following corollary:
\begin{cor}
     Let $\delta>0$, $u \in C(\overline \Omega_{\delta}) \cap V_{0}^{s,p}(\Omega|\R^n)$ satisfy $ u>0$ in  $\Omega$ and 
    \begin{align*}
    - 2 (\mathrm{diam}\, \Omega)^{-(n+ps)}\int_{K} u^{p-1}(y)\, \d y \leq (-\Delta_p)^s u \leq  1, \quad  \text{in } \Omega,
    \end{align*}
    in the locally weak sense, where $K \Subset \Omega$. If either $u(x_0)\geq D$ or $\|u\|_{\L^q(\Omega \setminus K)} \geq D$, where $x_0 \in \Omega \setminus K$ is a fixed point and $D>0,1 \leq q  <\infty$ are fixed constants, then
    \begin{align*}
      \sup_{B} u \leq C \inf_{B} u,
    \end{align*} for every subset $B \Subset \Omega$, where $C$ is a positive constant, which depends on $\Omega, \delta, K, n, s,p, D, x_0$ or $q$.
\end{cor}

}

\note{In the last result}, we prove the isolation of the first $(s,p)$-eigenvalue.

Define the Sobolev exponent
\begin{align*}
    p^{\ast}_s := \begin{cases}
        \frac{pn}{n-ps}, \quad &\text{if } ps < n,\\
        \infty, \quad &\text{if } ps \geq n.
    \end{cases}
\end{align*}
and consider the following minimization problem
\begin{equation}\label{eq:fist-eignevalue}
\begin{aligned}
\Lambda_{p,q}:=\inf_{\phi \in C^{\infty}_0(\Omega)} \biggl\lbrace \iint_{\R^n \times \R^n} \frac{|\phi(x)-\phi(y)|^p }{|x-y|^{n+ps}} \, \d x \d y \, : \|\phi\|_{\L^q(\Omega)}=1 \biggr\rbrace,
\end{aligned}
\end{equation}
for $1<q <p^{\ast}_s$. 
\begin{thm}
\label{thm:isolation}
If $1<q \leq p$ and $\Omega$ has regular boundary for the $s$-fractional $p$-Laplacian, then there exist no sequences $\note{\lambda_i>\Lambda_{p,q}}, u_i \in V^{s,p}_0(\Omega|\R^n)$, \note{which satisfy} 
\begin{align*}
\|u_i\|_{\L^q(\Omega)}&=1,\\
\lim_{i \to \infty} \lambda_i &= \Lambda_{p,q},\\
(-\Delta_p)^s u_i &= \lambda_i |u_i|^{q-2}u_i, \quad \text{weakly in } \Omega.
\end{align*}
    
\end{thm}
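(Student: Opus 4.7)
The plan is to argue by contradiction: assume such a sequence $(\lambda_i,u_i)$ exists. Testing the equation against $u_i$ gives
\[
\iint_{\R^n\times\R^n}\frac{|u_i(x)-u_i(y)|^p}{|x-y|^{n+ps}}\,\d x \d y=\lambda_i\int_{\Omega}|u_i|^q\,\d x=\lambda_i,
\]
so $\{u_i\}$ is bounded in $V^{s,p}_0(\Omega|\R^n)$. By compact embedding into $\L^q(\Omega)$ (valid for $q<p^\ast_s$), a subsequence converges weakly in $V^{s,p}_0$, strongly in $\L^q(\Omega)$, and a.e.\ to some $u$. Passing to the limit in the weak formulation (using the $S^+$-property of $(-\Delta_p)^s$ to upgrade weak convergence to strong in $V^{s,p}_0$) yields $u\in V^{s,p}_0(\Omega|\R^n)$ with $\|u\|_{\L^q(\Omega)}=1$ and $(-\Delta_p)^s u=\Lambda_{p,q}|u|^{q-2}u$ weakly in $\Omega$.

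Lower semicontinuity then forces $u$ to minimize \eqref{eq:fist-eignevalue}; since $||u(x)|-|u(y)||\leq|u(x)-u(y)|$, the function $|u|$ is also a minimizer and weakly solves the same equation. By Wiener regularity and standard $\L^\infty$ estimates, $|u|\in C(\R^n)$, so Corollary \ref{cor:key} applied with $g(t)=\Lambda_{p,q}t^{q-1}$ (continuous on $[0,\infty)$, $g(0)=0$) gives $|u|>0$ in $\Omega$. Hence $u$ has constant sign, and I assume $u>0$ in $\Omega$.

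I next claim $u_i$ changes sign for large $i$. If, on a subsequence, $u_i$ does not change sign (WLOG $u_i\geq 0$), then Corollary \ref{cor:key} gives $u_i>0$ in $\Omega$, and the simplicity of the first $(s,p)$-eigenvalue---obtained via a Picone-type inequality for the fractional $p$-Laplacian, valid throughout $1<q\leq p$---forces $u_i$ to be proportional to $u$ and hence $\lambda_i=\Lambda_{p,q}$, a contradiction. So both $u_i^+$ and $u_i^-$ are nontrivial for large $i$. Using the elementary inequalities
\[
|a-b|^{p-2}(a-b)(a^+-b^+)\geq|a^+-b^+|^p, \quad |a-b|^{p-2}(a-b)(a^--b^-)\leq -|a^--b^-|^p,
\]
for $a,b\in\R$, and testing the equation with $u_i^+$ and $-u_i^-$ respectively, I obtain
\[
\iint_{\R^n\times\R^n}\frac{|u_i^\pm(x)-u_i^\pm(y)|^p}{|x-y|^{n+ps}}\,\d x\d y\leq\lambda_i\|u_i^\pm\|_{\L^q(\Omega)}^q.
\]

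Finally, in the case $1<q<p$, Poincaré yields $\Lambda_{p,q}\|u_i^\pm\|_{\L^q(\Omega)}^p$ as a lower bound for the double integral, so $\|u_i^\pm\|_{\L^q(\Omega)}^{p-q}\geq\Lambda_{p,q}/\lambda_i$; as $u_i^+$ and $u_i^-$ have disjoint supports, $\|u_i^+\|_{\L^q(\Omega)}^q+\|u_i^-\|_{\L^q(\Omega)}^q=1$, forcing $1\geq 2(\Lambda_{p,q}/\lambda_i)^{q/(p-q)}\to 2$, a contradiction. In the endpoint case $q=p$, setting $\Omega_i^\pm:=\{\pm u_i>0\}$ and using $u_i^\pm\in V^{s,p}_0(\Omega_i^\pm|\R^n)$, the fractional Faber--Krahn inequality $\Lambda_{p,p}(A)\geq c(n,p,s)|A|^{-sp/n}$ combined with the above bound gives $|\Omega_i^\pm|\geq c'>0$, while $u_i\to u>0$ a.e.\ in $\Omega$ forces $|\Omega_i^-|\to 0$ by dominated convergence, a contradiction. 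The main obstacles I expect are the rigorous passage to the limit in the nonlinear weak equation via the $S^+$-property, the Picone argument needed to exclude the non-sign-changing branch throughout the full range $1<q\leq p$, and justifying the admissibility of $u_i^\pm$ in the Faber--Krahn estimate on the possibly rough level sets $\Omega_i^\pm$ in the $q=p$ endpoint.
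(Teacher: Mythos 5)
Your setup (energy bound, compact embedding, passage to the limit eigenfunction $u$ with constant sign, and the exclusion of the non-sign-changing branch via the Picone/hidden-convexity argument, which is the paper's Proposition \ref{prop:signchanginh}) matches the paper. The endpoint $q=p$ via Faber--Krahn is also fine; this is exactly the Brasco--Parini argument, and the paper notes that case needs no Wiener regularity. The problem is the main case $1<q<p$, where your contradiction rests on a reversed inequality. From the Poincar\'e inequality $[u_i^\pm]^p_{V^{s,p}(\R^n)}\geq \Lambda_{p,q}\|u_i^\pm\|_{\L^q(\Omega)}^p$ and the tested bound $[u_i^\pm]^p_{V^{s,p}(\R^n)}\leq \lambda_i\|u_i^\pm\|_{\L^q(\Omega)}^q$, one gets
\begin{align*}
\Lambda_{p,q}\,\|u_i^\pm\|_{\L^q(\Omega)}^{p}\leq \lambda_i\,\|u_i^\pm\|_{\L^q(\Omega)}^{q},
\qquad\text{i.e.}\qquad
\|u_i^\pm\|_{\L^q(\Omega)}^{p-q}\leq \frac{\lambda_i}{\Lambda_{p,q}},
\end{align*}
an \emph{upper} bound, not the lower bound $\|u_i^\pm\|_{\L^q(\Omega)}^{p-q}\geq \Lambda_{p,q}/\lambda_i$ you claim. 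Since $p-q>0$ and $\|u_i^\pm\|_{\L^q(\Omega)}\leq 1$ anyway, this inequality carries essentially no information when $\lambda_i\to\Lambda_{p,q}$, and the conclusion ``$1\geq 2(\Lambda_{p,q}/\lambda_i)^{q/(p-q)}\to 2$'' evaporates. (Your computation would close only in the range $q>p$, where the exponent $p-q$ is negative and the bound flips into a lower bound tending to $1$; that range is outside the theorem.) This is precisely the known obstruction in the sublinear regime: the $\L^q$-normalized Rayleigh quotient does not split over nodal components the way the $\L^p$-normalized one does.

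The paper closes this gap by an entirely different mechanism, and this is where the Wiener regularity hypothesis actually does its work. After establishing locally uniform convergence $v_i\to u>0$ and continuity of $u$, $v_i$, $u_{\tor}$ up to the boundary (via Proposition \ref{prop:Wiener}), it constructs the barrier $\tilde v_i=(\lambda_i\|v_i\|_{\L^\infty(\Omega)}^{q-1})^{1/(p-1)}u_{\tor}$ to verify that the right-hand sides $\lambda_i|v_i|^{q-2}v_i$ vanish along sequences approaching $\partial\Omega$, and then invokes the boundary comparison Lemma \ref{thm:boundbelow} to conclude $v_i\geq Cu>0$ in all of $\Omega$ for large $i$. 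Hence $v_i$ does not change sign, and Proposition \ref{prop:signchanginh} forces $\lambda_i=\Lambda_{p,q}$. If you want to repair your proof for $1<q<p$, you need an argument of this type showing the sign-changing branch is eventually empty near the boundary as well as in the interior; the interior part alone (locally uniform convergence to $u>0$) does not control $u_i^-$ near $\partial\Omega$, which is why the norm-splitting shortcut cannot be salvaged.
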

This generalizes previous results in \cite{AA,BP,FL}. Moreover, we bring a shorter proof for the case of $C ^{1,1}$ boundary condition \note{and $p \geq 2$}, see Remark \ref{rem:shortproof}. 
\note{We remark that the range $1<q \leq p$ is used in two major parts, which play key roles in the proof of Theorem \ref{thm:isolation}. First, we derive that the minimizes of $\Lambda_{p,q}$ are unique, up to a multiplication by a constant, and strictly positive or negative on $\Omega$, see Proposition \ref{prop:comparison}. Second, we imply that the only $(s,p)$-eigenvalue $\lambda>0$ with non-negative $(s,p)$-eigenfunction $u$, i.e., a non-negative function $u \in V^{s,p}_0(\Omega|\R^n)$ which satisfies $(-\Delta_p)^s u = \lambda |u|^{q-2}u$ weakly in $\Omega$ and $\|u\|_{\L^q(\Omega)}=1$, is $\Lambda_{p,q}$, see Proposition \ref{prop:signchanginh}.} Notice that in the case \note{of} $q=p$ in \cite{BP}, the isolation of the first $(s,p)$-eigenvalue is proved without the assumption of Wiener regularity on the boundary of $\Omega.$

\note{The main difficulty in this work is the proof of} a similar version of \cite[Lem. 5.1]{AA} for solutions to fractional $p-$Laplacian. The nonlinearity of the equation causes the argument in \cite[Lem. 5.1]{AA} to fall down. To resolve the issue, we apply the method of the proof for \note{the} comparison principle for viscosity solutions. Then, the argument follows in the same direction as proof of \cite[Lem. 3.1]{GS}. One of the interesting aspects of this work is the simplicity of \note{proofs}, although one works with the weakest notion of solutions, namely the viscosity solutions. 

The summary of the whole work is as follows: First, we bring the definitions of spaces and notions of solutions in Section \ref{sec:Preliminaries}. Then, in Section \ref{sec:wiener}, we present the Wiener criterion for the $s$-fractional $p$-Laplacian with a nonzero right-hand side. Afterward, the first two results, namely a generalized Hopf's lemma and global boundary Harnack inequality are proved in Section \ref{sec:HopfandHarnack}. Finally, in Section \ref{sec:isolation}, we prove the isolation of the first $(s,p)$-eigenvalue.

\section{Preliminaries}
\label{sec:Preliminaries}
In the entire work, $p>1$, $0<s<1$, and $\Omega \subset \R^n$ is a bounded open set.
\subsection{Spaces}
\label{sec:spaces}
For every $K \subset \R^n$ and $x \in \R^n$, we define $K+x :=\{y \in \R^n: y= z+x, z \in K\}.$
Let $\L^{p-1}_{\loc}(\R^n)$ be the space of measurable functions $u: \R^n \to \R$ such that $\|u\|_{\L^{p-1}(K)}< \infty$ for every compact \note{subset} $K \subset \R^n.$ The space $\L^{p-1}_{ps}(\R^n)$ consists of $u \in \L^{p-1}_{\loc}(\R^n)$, satisfying
\begin{align*}
   \note{ \|u\|^{p-1}_{\L^{p-1}_{ps}(\R^n)}}:= \int_{\R^n} \frac{|u(x)|^{p-1}}{1+ |x|^{n+ps}} \, \d x < \infty.
\end{align*}
For every measurable function $u: \R^n \to \R,$  we define the semi-norm
\begin{align*}
    [u]^p_{V^{s,p}(\R^n)} :=   \int_{\R^n \times \R^n} \frac{|u(x)-u(y)|^p}{|x-y|^{n+ps}} \, \d x \d y.
\end{align*} For the boundary value of solutions, we consider the space of functions
\begin{align*}
    V^{s,p}(\Omega|\R^n) := \biggl\lbrace u:\R^n \to \R \; : \; u|_{\Omega} \in \L^p(\Omega) \; , \; \frac{u(x)-u(y)}{|x-y|^{n/p+s}} \in \L^p(\Omega \times \R^n)   \biggr\rbrace,
\end{align*}
with the norm 
\begin{align*}
    \|u\|^p_{V^{s,p}(\Omega|\R^n)} := \int_{\Omega} |u(x)|^p \, \d x + \int_{\Omega \times \R^n} \frac{|u(x)-u(y)|^p}{|x-y|^{n+ps}} \, \d x \d y.
\end{align*}
We denote $V^{s,p}_{\loc}(\Omega|\R^n)$ as functions $u: \R^n \to \R$ which satisfy
\begin{align*}
    \int_{K} |u(x)|^p \, \d x +  \int_{K \times \R^n} \frac{|u(x)-u(y)|^p}{|x-y|^{n+ps}} \, \d x \d y <  \infty,
\end{align*}
for every compact set $K \subset \Omega.$ \note{Note that by fractional Poincar\'{e}-Sobolev inequality, see Theorem \ref{thm:sobolevpoincare-ineq}, $[\,]_{V^{s,p}}$ is a norm on $C^{\infty}_0(\Omega)$. The completion of the space $C^{\infty}_0(\Omega)$ in $V^{s,p}(\Omega|\R^n)$ with respect to the norm $[\,]_{V^{s,p}(\R^n)}$} is denoted by $V^{s,p}_0(\Omega|\R^n)$, and the space of \note{functions} with boundary $g \in V^{s,p}(\Omega|\R^n)$ is defined by
\begin{align*}
    V^{s,p}_g(\Omega|\R^n) := \biggl \lbrace u \in V^{s,p}(\Omega|\R^n): u-g \in V^{s,p}_0(\Omega|\R^n)     \biggr \rbrace.
\end{align*}
We introduce $(V_0^{s,p}(\Omega|\R^n))^{\star}$ as the dual space of $V_0^{s,p}(\Omega|\R^n)$, and let $\|\,\|_{(V_0^{s,p}(\Omega|\R^n))^{\star}}$ denote the natural norm on $(V_0^{s,p}(\Omega|\R^n))^\star$. Define the Hölder dual of $p$ by $p'$, i.e.,  $\frac{1}{p'} + \frac{1}{p}=1.$ For every measurable $u: \R^n \to \R$ and $K \subset \R^n,$ the norm $\|\,\|_{C^{\alpha}(K)}$ is defined by 
\begin{align*}
  \|u\|_{C^{\alpha}(K)} := \sup_{(x,y) \in K \times K} \frac{|u(x)-u(y)|}{|x-y|^{\alpha}}.
\end{align*}
The set of critical points of a differentiable function $u$ is denoted by $N_u$. Let $D \subset \Omega$ be an open \note{subset}. For every $\beta \geq 1 $, we define $C^{2}_{\beta}(D)$ as the space of $C^2$ functions $u$ on $D$, satisfying 
\begin{align*}
    \sup_{x \in D \setminus N_u} \biggl ( \frac{\min\{\dist(x,N_u)^{\beta-1},1\}}{|\nabla u(x)|} + \frac{|D^2 u(x)|}{\dist(x,N_u)^{\beta-2}} \biggr ) < \infty.
\end{align*}
For example, $u(x) = |x-x_0|^{\beta} \in C^2_{\beta}(\Omega)$ for every $x_0 \in \R^n, \note{\beta \geq 2}$. Note that we need to use the space $C^2_{\beta}(D)$ to define the notion of viscosity solutions, see \cite{KKL}.

\subsection{Fractional Poincar\'{e}-Sobolev  inequality and fractional Sobolev embedding} \label{sec:fracsobol}
We bring the fractional Poincar\'{e}-Sobolev inequality.

\begin{thm}\label{thm:sobolevpoincare-ineq}
Let $\Omega \subset \R^n$ be an open bounded \note{subset}. Then, for every $u \in C^{\infty}_0(\Omega)$, we have 
\[
\begin{aligned}
    \| u \|_{\L^{p_s^\ast}(\Omega)} \leq C_1 [u]_{V^{s,p}(\R^n)}, \quad &\text{ if } ps < n, \\
    \| u\|_{\L^\infty(\Omega)} \leq C_2 
    [u]_{V^{s,p}(\R^n)}, \quad &\text{ if } ps > n,\\
    \| u\|_{\L^q(\Omega)} \leq C_3 
    [u]_{V^{s,p}(\R^n)}, \quad &\text{ if } ps = n,
\end{aligned}
\]
for every  $1\leq q< \infty$, where $C_1$ depends on $n,s,p$, $C_2$ depends on $n,s,p, \Omega$, and $C_3$ depends on $n,s,p,q, \Omega$.
\end{thm}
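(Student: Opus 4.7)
The plan is to reduce each case of this classical fractional Sobolev--Poincar\'e inequality to a pointwise or capacitary estimate on the whole space, using that $u \in C^{\infty}_0(\Omega)$ extends by zero to a smooth function with compact support in $\Omega$. The three regimes $ps < n$, $ps > n$, and $ps = n$ should be treated in this order, with the critical case reduced to the subcritical one.

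For $ps < n$ I would establish $\|u\|_{\L^{p^{\ast}_s}(\R^n)} \lesssim [u]_{V^{s,p}(\R^n)}$ by a Maz'ya-style truncation argument. Setting $A_k := \{|u| > 2^k\}$ and $v_k := \min\{(|u| - 2^k)_+, 2^k\}$, one uses that any $1$-Lipschitz truncation does not increase the Gagliardo seminorm, so $\sum_k [v_k]^p_{V^{s,p}(\R^n)} \lesssim [u]^p_{V^{s,p}(\R^n)}$. Combining this with the capacitary bound $2^{kp}\, \Cap_{s,p}(A_{k+1}) \lesssim [v_k]^p_{V^{s,p}(\R^n)}$ and the fractional isocapacitary inequality $|A_{k+1}|^{(n-ps)/n} \lesssim \Cap_{s,p}(A_{k+1})$, one sums in $k$ to obtain the layer-cake estimate $\sum_k 2^{k p^{\ast}_s} |A_k| \lesssim [u]^{p^{\ast}_s}_{V^{s,p}(\R^n)}$, which is the desired embedding.

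For $ps > n$ this is a Morrey-type embedding, and I would prove the pointwise H\"older estimate
\[
  |u(x) - u(y)| \leq C(n,s,p)\, |x-y|^{s - n/p}\, [u]_{V^{s,p}(\R^n)},
\]
for all $x, y \in \R^n$, by writing $u(x) - u(y)$ as a telescoping sum of ball averages along chains of balls shrinking to $x$ and to $y$, and bounding each increment through Jensen and Fubini by a tail of the Gagliardo seminorm. Since $u$ vanishes on $\R^n \setminus \Omega$, evaluating at $x \in \Omega$ and some $y \notin \Omega$ yields $\|u\|_{\L^\infty(\Omega)} \leq C(\Omega,n,s,p)\, [u]_{V^{s,p}(\R^n)}$. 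For the critical case $ps = n$ I would pick $\tilde s \in (0,s)$ with $\tilde s p < n$, split the integral defining $[u]_{V^{\tilde s,p}(\R^n)}$ over $\{|x-y| \leq 1\}$ (controlled directly by $[u]_{V^{s,p}(\R^n)}$) and $\{|x-y| > 1\}$ (controlled using the compact support of $u$ by $\|u\|_{\L^p(\Omega)}$), apply the already-proven subcritical embedding at exponent $\tilde s$, and conclude the $\L^q(\Omega)$ bound for every $1 \leq q < \infty$ via H\"older's inequality on the bounded set $\Omega$, since $p^{\ast}_{\tilde s}$ can be made arbitrarily large by choosing $\tilde s$ close enough to $s$.

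The main obstacle is the truncation step in the subcritical case: showing that the Gagliardo seminorm interacts cleanly with dyadic truncations and coupling this with a genuinely fractional isocapacitary inequality takes some care. The Morrey estimate is a relatively clean telescoping computation, and the critical case is a routine interpolation from the subcritical one. A more streamlined route in the subcritical case would be to use the Riesz-potential representation together with Hardy--Littlewood--Sobolev, but this is natural only for $p = 2$ and becomes awkward for general $p$, which is why I would stick with the truncation approach.
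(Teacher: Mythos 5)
The paper does not actually prove this theorem: it is quoted from Peetre \cite{P}, so your self-contained sketch is by construction a different route, and it is the standard modern one (Maz'ya truncation for $ps<n$, Morrey/telescoping averages for $ps>n$, reduction to a subcritical exponent for $ps=n$). The truncation computation is sound: with $v_k=\min\{(|u|-2^k)_+,2^k\}$ one indeed has $\sum_k[v_k]^p_{V^{s,p}(\R^n)}\le[u]^p_{V^{s,p}(\R^n)}$ (for fixed $x,y$ the increments $v_k(x)-v_k(y)$ all have the same sign and telescope to $|u(x)|-|u(y)|$, so $\sum_k|v_k(x)-v_k(y)|^p\le|u(x)-u(y)|^p$), and the capacitary bound plus $p\cdot\frac{n}{n-ps}=p^*_s$ and $\ell^1\hookrightarrow\ell^{p^*_s/p}$ give the layer-cake sum. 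What this buys over the citation is a proof that transparently yields the scale-invariant constant $C_1=C_1(n,s,p)$; what it costs is that the whole weight of the subcritical case is shifted onto the fractional isocapacitary inequality $|A|^{(n-ps)/n}\lesssim\Cap_{s,p}(A)$, which is \emph{not} weaker than the weak-type Sobolev inequality — it is equivalent to it — so you must prove it by an independent mechanism (symmetrization of the Gagliardo seminorm plus the scaling $\Cap_{s,p}(B_r)\simeq r^{n-ps}$, or a potential-theoretic argument), not by testing the Sobolev inequality on plateau functions. As stated, your outline has not discharged this and it is the one place where circularity could silently enter; alternatively, the dyadic-cube argument of Di Nezza--Palatucci--Valdinoci avoids capacities altogether.

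A second, smaller point: in the critical case your chain ends with $\|u\|_{\L^q(\Omega)}\lesssim[u]_{V^{\tilde s,p}(\R^n)}\lesssim[u]_{V^{s,p}(\R^n)}+\|u\|_{\L^p(\Omega)}$, and the theorem allows no $\L^p$ term on the right. H\"older on $\Omega$ cannot absorb it (the constant $|\Omega|^{1/p-1/q}$ is not small). You need the elementary fractional Poincar\'e inequality $\|u\|_{\L^p(\Omega)}\le C(\Omega,n,s,p)\,[u]_{V^{s,p}(\R^n)}$ for $u\in C^\infty_0(\Omega)$, obtained by comparing $u(x)$ for $x\in\Omega$ with $u(y)=0$ for $y$ ranging over a fixed annulus outside $\Omega$ at distance comparable to $\diam\Omega$ and averaging in $y$. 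With that inserted, the critical and supercritical cases are complete, with the $\Omega$-dependence of $C_2,C_3$ entering exactly where the statement permits it.
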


\note{\begin{proof}
    The proof is essentially contained in \cite[Thm 8.1, 8.2, 9.1]{P}. However, since the norms and spaces are slightly different, we bring the proof for the reader's convenience. First, by \cite[Prop 4.1 and 4.5]{BS}, we derive that the fractional Sobolev spaces in \cite{P}, which are defined via interpolation, are equivalent to $V^{s,p}(\R^n)$. Now, for the case $p s < n$, we refer to \cite[Thm 8.1]{P}. Moreover, the proof of the case $ps >n$ exists in \cite[Prop. 2.9]{BLP}. For last case $ps = n$, we first use \cite[Thm 9.1]{P} and H\"older's inequality to imply that 
\begin{align}
    \label{eq:nonhomogenousineq}
        \|u\|_{\L^q(\Omega)} \leq C (\|u\|_{\L^p(\R^n)} + [u]_{V^{s,p}(\R^n)}),
    \end{align}
    for every $u \in C^{\infty}_0(\Omega)$ and $1 \leq q < \infty$, where $C$ is a constant depending on $n,s,p,q,\Omega$. Now, by \cite[Lem. 2.4]{BLP}, it is obtained that 
    \begin{align}
    \label{eq:PoincareLp}
        \|u\|_{\L^p(\Omega)} \leq C' [u]_{V^{s,p}(\R^n)},
    \end{align}
for every $u \in C^{\infty}_0(\Omega),$ where $C'$ is a constant depending on $n,s,p, \Omega$. Hence, by \eqref{eq:nonhomogenousineq} and \eqref{eq:PoincareLp}, we finish the proof for case of $ps=n.$

\end{proof}}

\note{\begin{rem}
\label{rem:Dualfractionalsobolev}
    By Hölder's inequality and Theorem \ref{thm:sobolevpoincare-ineq}, we have $ \L^{(p_s^*)'}(\Omega)  \subset (V_0^{s,p}(\Omega|\R^n))^{\star}$ if $ps \neq n$, where we define the action of $f \in \L^{(p_s^*)'}(\Omega)$ on $u \in V_0^{s,p}(\Omega|\R^n)$ by the paring 
\begin{align*}
   \int_{\R^n} f\, u \, \d x. 
\end{align*}
 Moreover, by Hölder's inequality and Theorem \ref{thm:sobolevpoincare-ineq}, $\L^q(\Omega) \subset (V_0^{s,p}(\Omega|\R^n))^{\star}$ if $ps =n$ for every $1< q<\infty.$
\end{rem}}

 Note that, for every $1 \leq q < p_s^{\ast} $, we have 
$$\| u \|^p_{\L^q(\Omega)} \leq C [u]^p_{V^{s,p}(\R^n)}, \quad \text{for } u \in V_0^{s,p}(\Omega|\R^n),$$
where we used Theorem \ref{thm:sobolevpoincare-ineq} and H\"older's inequality. Hence, the eigenvalue $\Lambda_{p,q}$ defined in \eqref{eq:fist-eignevalue} is strictly positive, and the inverse $\Lambda_{p,q}^{-1}$ is the best constant $C$ in the above inequality.
\begin{thm}
\label{thm:fractionalsobolev}
   The space  $V^{s,p}_0(\Omega|\R^n)$ is compactly embedded in $\L^{q}(\Omega)$ for bounded open sets $\Omega \subset \R^n$ and $1<
   q\leq p.$
\end{thm}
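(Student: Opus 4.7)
The plan is a standard Rellich--Kondrachov argument via mollification and the Fr\'echet--Kolmogorov compactness criterion. Let $\{u_i\}$ be a bounded sequence in $V_0^{s,p}(\Omega|\R^n)$. Since $V_0^{s,p}(\Omega|\R^n)$ is the completion of $C^{\infty}_0(\Omega)$, each $u_i$ vanishes a.e.\ outside $\Omega$ and the full Gagliardo semi-norm $[u_i]_{V^{s,p}(\R^n)}$ is uniformly bounded; Theorem \ref{thm:sobolevpoincare-ineq} then yields a uniform $\L^p(\Omega)$-bound. My first goal is to extract a subsequence converging in $\L^p(\Omega)$; since $\Omega$ is bounded and $1<q\leq p$, H\"older's inequality will then upgrade this to convergence in $\L^q(\Omega)$.

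The key uniform estimate is the following mollification bound. Fix a standard mollifier $\rho\in C^{\infty}_c(B_1)$ with $\rho\geq 0$ and $\int \rho=1$, put $\rho_{\varepsilon}(x):=\varepsilon^{-n}\rho(x/\varepsilon)$ and $u_{i,\varepsilon}:=u_i\ast \rho_{\varepsilon}$. Setting $\psi_u(y):=\int_{\R^n}|u(x+y)-u(x)|^p\,\d x$, Jensen's inequality for the probability measure $\rho_{\varepsilon}(y)\,\d y$ and Fubini's theorem give
\begin{equation*}
\|u_{i,\varepsilon}-u_i\|_{\L^p(\R^n)}^p \leq \int_{B_{\varepsilon}} \rho_{\varepsilon}(y)\,\psi_{u_i}(y) \,\d y \leq \|\rho_{\varepsilon}\|_{\infty} \int_{B_{\varepsilon}} \psi_{u_i}(y)\,\d y.
\end{equation*}
Combining $\|\rho_{\varepsilon}\|_{\infty}\leq C\varepsilon^{-n}$, the elementary bound $\psi_{u_i}(y)\leq \varepsilon^{n+ps}\,\psi_{u_i}(y)/|y|^{n+ps}$ on $B_{\varepsilon}$, and the Gagliardo representation $[u]_{V^{s,p}(\R^n)}^p=\int_{\R^n} \psi_u(y)/|y|^{n+ps}\,\d y$, I obtain
\begin{equation*}
\|u_{i,\varepsilon}-u_i\|_{\L^p(\R^n)} \leq C\varepsilon^s [u_i]_{V^{s,p}(\R^n)},
\end{equation*}
so $u_{i,\varepsilon}\to u_i$ in $\L^p(\R^n)$ uniformly in $i$ as $\varepsilon\to 0$.

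For each fixed $\varepsilon>0$ the family $\{u_{i,\varepsilon}\}_i$ consists of smooth functions supported in the bounded set $\overline{\Omega+B_{\varepsilon}}$ with uniform $\L^{\infty}$ and $C^1$ bounds $\|u_i\ast \rho_{\varepsilon}\|_{\infty}+\|\nabla(u_i\ast \rho_{\varepsilon})\|_{\infty}\leq C(\varepsilon)\|u_i\|_{\L^p(\Omega)}$ coming from Young's inequality. Ascoli--Arzel\`a therefore produces an $\L^p$-convergent subsequence for each $\varepsilon$. A diagonal argument over $\varepsilon_k=2^{-k}$, combined with the triangle inequality and the uniform mollification bound above, shows that the resulting diagonal subsequence of $\{u_i\}$ is Cauchy, hence convergent, in $\L^p(\Omega)$. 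The main technical point is the mollification estimate; the rest is a standard packaging of compactness tools, and the nonlocal geometry enters only through the Gagliardo representation and the pointwise comparison $|y|\leq \varepsilon$ used above.
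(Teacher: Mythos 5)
Your proof is correct, but it takes a genuinely different route from the paper. The paper's argument is a two-line reduction: choose a large ball $B\supset\Omega$, observe $V^{s,p}_0(\Omega|\R^n)\subset V^{s,p}_0(B|\R^n)$, note that $B$ is an extension domain, and cite the compact embedding theorem of Di Nezza--Palatucci--Valdinoci. You instead give a self-contained Fr\'echet--Kolmogorov argument: the mollification estimate $\|u_i\ast\rho_{\eps}-u_i\|_{\L^p}\leq C\eps^s[u_i]_{V^{s,p}(\R^n)}$, Ascoli--Arzel\`a for each fixed $\eps$, and a diagonal extraction. Both are sound; your version has the advantage of not invoking the extension machinery at all, since for the zero-boundary-value space one can work directly with the global Gagliardo seminorm, while the paper's version is shorter on the page because the analytic work is outsourced to the cited reference (whose proof is, in substance, the same mollification argument you wrote out). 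The only step you should make explicit is the identification at the start: an element of the completion $V^{s,p}_0(\Omega|\R^n)$ is represented by a function vanishing a.e.\ outside $\Omega$, and for such $u$ the full seminorm satisfies $[u]^p_{V^{s,p}(\R^n)}\leq 2\|u\|^p_{V^{s,p}(\Omega|\R^n)}$ because the integral over $(\R^n\setminus\Omega)\times(\R^n\setminus\Omega)$ vanishes and the two mixed regions coincide by symmetry; this is exactly what licenses your use of Theorem \ref{thm:sobolevpoincare-ineq} and of the Gagliardo representation over all of $\R^n\times\R^n$.
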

\begin{proof}
   Similar to \cite[Thm. 2.2]{AA}, we consider a large enough ball $B \subset \R^n$ such that $\Omega \subset B$. Then, $V^{s,p}_0(\Omega|\R^n) \subset V^{s,p}_0(B|\R^n)$ and $B$ is an extension domain, see \cite[Thm. 5.4]{DPV}. By \cite[Thm. 7.1]{DPV}, $V^{s,p}_0(B|\R^n)$ is compactly embedded in $\L^q(B)$ for $1<q \leq p$. In conclusion, $V^{s,p}_0(\Omega|\R^n)$ is compactly embedded in $\L^q(\Omega)$ for $1<q \leq p$.
\end{proof}

\subsection{Notions of solutions}
In this subsection, we bring the different notions of solutions for fractional $p$-Laplacian equations.

The first notion is pointwise solutions. 

\begin{defn}
    For every $f \in C(\Omega)$, we say that $u \in \L^{p-1}_{ps}(\R^n) \note{\cap C(\Omega)}$ satisfies $(-\Delta_p)^s u =f$ pointwise in $\Omega$ if 
\begin{align*}
&   2 \mathrm{P.V.} \int_{\R^n} \frac{|u(x)-u(y)|^{p-2} (u(x)-u(y))}{|x-y|^{n+ps}} \, \d y\\&:=
 2\lim_{\epsilon \to 0}  \int_{\R^n \setminus B(x,\epsilon)} \frac{|u(x)-u(y)|^{p-2} (u(x)-u(y))}{|x-y|^{n+ps}} \, \d y \\&= f(x), \quad \text{for } x \in \Omega.
\end{align*}

\end{defn}
For simplicity, we remove the notation $\mathrm{P.V.}$ in the rest of the work.

The following is the definition of the locally weak and weak solutions.

\begin{defn}
    Let $f \in \L^1_{\loc}(\Omega).$ We say that $u \in \L_{ps}^{p-1}(\R^n) \cap V^{s,p}_{\loc}(\Omega|\R^n)$ is locally weak \note{subsolution (supersolution) of the equation $(-\Delta_p)^s u =  f $ in $\Omega$, or equivalently we say that $(-\Delta_p)^s u \leq (\geq)\, f$ in $\Omega$ in the locally weak sense,} if 
    \begin{align}
    \label{eq:weaksolambig}
  \int_{\R^n \times \R^n} \frac{|u(x)-u(y)|^{p-2} (u(x)-u(y))\, (\phi(x)-\phi(y)) }{|x-y|^{n+ps}} \, \d x \d y \leq (\geq) \int_{\Omega} f(x)\, \phi(x) \, \d x,
    \end{align}
for all non-negative $\phi \in C^{\infty}_0(\Omega).$ \note{ Note that, if $\phi$ is supported in $K \Subset \Omega$, we have 
\begin{align*}
       &\int_{K \times K} \frac{|u(x)-u(y)|^{p-2} (u(x)-u(y))\, (\phi(x)-\phi(y)) }{|x-y|^{n+ps}} \, \d x \d y \\&+  2 \int_{K \times \R^n} \frac{|u(x)-u(y)|^{p-2} (u(x)-u(y))\, (\phi(x)-\phi(y)) }{|x-y|^{n+ps}} \, \d x \d y \\&=\int_{\R^n \times \R^n} \frac{|u(x)-u(y)|^{p-2} (u(x)-u(y))\, (\phi(x)-\phi(y)) }{|x-y|^{n+ps}} \, \d x \d y,  
\end{align*}
 so the left-hand-side of \eqref{eq:weaksolambig} is well-defined for  $u \in \L_{ps}^{p-1}(\R^n) \cap V^{s,p}_{\loc}(\Omega|\R^n)$. If $u$ is a locally weak subsolution and supersolution of  $(-\Delta_p)^s u = f$ in $\Omega$}, then we say that $(-\Delta_p)^s u = f$ locally weakly in $\Omega$.
In the case that $u \in V_g^{s,p}(\Omega|\R^n)$ for a function $g \in V^{s,p}(\Omega|\R^n)$, $f \in (V_0^{s,p}(\Omega|\R^n))^{\star}$, and $(-\Delta_p)^s u = f$ locally weakly in $\Omega$, we call $u$ a weak solution of $(-\Delta_p)^s u =f$ in $\Omega.$
    
\end{defn}

The following is the definition of viscosity solutions.
\begin{defn}
\label{defn:viscosity}
    We say that $u \in \L^{p-1}_{ps}(\R^n) \cap C(\Omega)$ is a viscosity subsolution (supersolution) of $\note{(-\Delta_p)^s u = f}$ in $\Omega$ \note{for $f \in C(\Omega)$}, \note{or equivalently we say that $(-\Delta_p)^s u \leq (\geq)\, f$ in $\Omega$ in the viscosity sense,} if for every $\note{x_0} \in \Omega$, \note{$\overline{B(\note{x_0},r)} \subset \Omega$}, and a function $\phi \in C^2(B(\note{x_0},r))$ \note{which} touches $u$ from above (below) at $\note{x_0},$ i.e., 
    \begin{align*}
        \phi (\note{x_0}) &= u(\note{x_0})\\
        \phi &> (<) u, \quad \text{in } \overline{B(\note{x_0},r)} \setminus \{\note{x_0}\},
    \end{align*}
and \note{satisfies at least} one of the following: \\
$(a).  \nabla \phi(x_0) \neq 0$ or $p > \frac{2}{2-s},$\\
$(b).  \nabla \phi(x_0) =0$, $x_0$ is an isolated critical point of $\phi$,  $\phi \in C^{2}_{\beta}(B(x_0,r))$ for some $\beta > \frac{ps}{p-1}$, and $1<p \leq \frac{2}{2-s}$,\\
 we have 
\begin{align*}
    2\int_{\R^n} \frac{|w(\note{x_0})-w(y)|^{p-2} (w(\note{x_0})-w(y))}{|\note{x_0}-y|^{n+ps}} \, \d y \leq (\geq) f(\note{x_0}),
\end{align*}
where \begin{align*}
    w &= \phi, \quad \text{in } B(\note{x_0},r),\\
    w&= u, \quad \text{in } \R^n \setminus B(\note{x_0},r).
\end{align*}
    We define $(-\Delta_p)^s u = f$ in $\Omega$ in the viscosity sense if \note{$u$ is a viscosity subsolution and supersolution of $\note{(-\Delta_p)^s u = f}$ in $\Omega$.} 
\end{defn}

Now, we prove the weak supersolutions with right-hand sides are viscosity supersolutions, see \cite{KKL} for zero right-hand side. 

\begin{prop}
\label{prop:equivalenceweakvisco}
    Let $u \in \L^{p-1}_{ps}(\R^n) \cap C(\Omega) \cap V^{s,p}_{\loc}(\Omega|\R^n)$ be a locally weak subsolution (supersolution) of $ \note{(-\Delta_p)^s u = f}$ in $\Omega$, where $f \in C(\Omega).$ Then, \note{$u$ is a viscosity subsolution (supersolution) of $ (-\Delta_p)^s u = f$ in $\Omega$.}
\end{prop}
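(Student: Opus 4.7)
The plan is to argue the supersolution case by contradiction; the subsolution case is symmetric. Assume $u$ satisfies the locally weak inequality $(-\Delta_p)^s u \geq f$ but fails to be a viscosity supersolution. By Definition~\ref{defn:viscosity} there exist $x_0 \in \Omega$, $r>0$ with $\overline{B(x_0,r)} \subset \Omega$, and $\phi \in C^2(B(x_0,r))$ satisfying $(a)$ or $(b)$, touching $u$ strictly from below at $x_0$, such that the glued function
$$w := \phi\,\chi_{B(x_0,r)} + u\,\chi_{\R^n \setminus B(x_0,r)}$$
satisfies $\mathcal{L}(w)(x_0) < f(x_0)$, where $\mathcal{L}(v)(x) := 2\int_{\R^n} A(v(x)-v(y))|x-y|^{-n-ps}\d y$ and $A(t):=|t|^{p-2}t$.

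The heart of the argument is a perturbation and continuity step. Using the regularity assumption $(a)$ or $(b)$ (which guarantees that the singular integral converges and is stable under $C^2$-perturbations of $\phi$, cf.\ \cite{KKL}) together with the continuity of $f$, the idea is to choose $\varepsilon>0$ and $\rho \in (0,r)$ small enough that, for
$$w_\varepsilon := (\phi+\varepsilon)\,\chi_{B(x_0,\rho)} + u\,\chi_{\R^n \setminus B(x_0,\rho)},$$
the uniform strict inequality $\mathcal{L}(w_\varepsilon)(x) \leq f(x) - c$ holds for all $x \in B(x_0,\rho/2)$ and some $c>0$. Because $\phi$ touches $u$ strictly from below, $u-\phi$ has a positive infimum on $\overline{B(x_0,r)} \setminus B(x_0,\rho/2)$; thus for $\varepsilon$ small, $\eta:=(\phi+\varepsilon-u)_+ \chi_{B(x_0,r)}$ is non-negative, compactly supported in $B(x_0,\rho/2)$, and lies in $V^{s,p}_0(\Omega|\R^n)$, so it is admissible as a test function by density. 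I expect this perturbation step to be the main obstacle: in case $(b)$ the isolated critical point of $\phi$ forces one to exploit the $C^2_\beta$-regularity with $\beta > ps/(p-1)$ in order to control the singular kernel near $x_0$ uniformly in small $\varepsilon$.

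Testing the locally weak supersolution inequality against $\eta$ yields
$$\iint \frac{A(u(x)-u(y))(\eta(x)-\eta(y))}{|x-y|^{n+ps}}\d x\d y \geq \int_\Omega f\eta\d x,$$
while Fubini combined with the antisymmetry $A(-t)=-A(t)$ converts the pointwise bound $\mathcal{L}(w_\varepsilon)\leq f-c$ on $\{\eta>0\}$ into
$$\iint \frac{A(w_\varepsilon(x)-w_\varepsilon(y))(\eta(x)-\eta(y))}{|x-y|^{n+ps}}\d x\d y \leq \int_\Omega f\eta\d x - c\int_\Omega \eta\d x.$$
Subtracting these two inequalities shows that the double integral of $[A(u(x)-u(y))-A(w_\varepsilon(x)-w_\varepsilon(y))](\eta(x)-\eta(y))/|x-y|^{n+ps}$ is strictly positive. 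Setting $\Phi:=w_\varepsilon-u$ so that $\eta=\Phi_+$, strict monotonicity of $A$ makes $A(u(x)-u(y))-A(w_\varepsilon(x)-w_\varepsilon(y))$ have the opposite sign to $\Phi(x)-\Phi(y)$, while monotonicity of $t\mapsto t_+$ makes $\Phi_+(x)-\Phi_+(y)$ have the same sign as $\Phi(x)-\Phi(y)$. Consequently the integrand is pointwise $\leq 0$, contradicting the strict positivity and completing the proof.
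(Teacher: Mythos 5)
Your argument is correct and follows the same broad skeleton as the paper's proof (argue by contradiction at the touching point, perturb the glued function to gain strict room, then compare), but the two implementations differ in instructive ways. The paper perturbs $w$ by subtracting $\epsilon\eta$ for a smooth bump $\eta\in C^2_0$, invoking \cite[Lem.~3.9]{KKL} to keep the inequality both pointwise and locally weakly, and then finishes by citing the comparison principle \cite[Prop.~2.10]{IMS} as a black box to get $w'-\epsilon\eta\geq u$, contradicting the value at the touching point. You instead perturb by a constant shift $+\varepsilon$ on a shrunken ball $B(x_0,\rho)$ (note the shift pushes $\mathcal{L}(w_\varepsilon)$ in the unfavorable direction, while shrinking the ball pushes it in the favorable one, so the uniform strict inequality does survive for small $\varepsilon$ by a dominated-convergence estimate using $u\in\L^{p-1}_{ps}(\R^n)$), and then you re-derive the comparison step by hand, testing with $(w_\varepsilon-u)_+$ and using monotonicity of $t\mapsto|t|^{p-2}t$. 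What your route buys is self-containedness of the comparison step and a test function adapted to the contact set; what it costs is that you must justify two technical points that the paper outsources to \cite{KKL}: (1) the continuity and stability of $x\mapsto(-\Delta_p)^s w(x)$ near $x_0$ under your perturbations, including case $(b)$ where the $C^2_\beta$ condition with $\beta>ps/(p-1)$ is needed (you correctly flag this as the main obstacle but do not execute it); and (2) the ``Fubini'' step converting the pointwise bound $\mathcal{L}(w_\varepsilon)\leq f-c$ into the weak inequality against $\eta$ --- this is not literal Fubini because the double integral is not absolutely convergent near the diagonal, and it is precisely the pointwise-implies-locally-weak content of \cite[Lem.~3.9]{KKL} that the paper leans on. With those two references supplied, your proof is complete.
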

\begin{proof}
   We prove the case of subsolutions and the other one follows immediately by replacing $u$ with $-u.$ Assume that $x \in \Omega, r>0$ satisfy $\overline{B(x,r)} \subset \Omega$, and  $\phi \in C^2(B(x,r))$ touches $u$ from above at the point $x$, which satisfies either the condition $(a)$ or $(b)$ in Definition \ref{defn:viscosity}. Define \begin{align*}
    w &:= \phi, \quad \text{in } B(x,r),\\
    w&:= u, \quad \text{in } \R^n \setminus B(x,r).
\end{align*}
Assume that $(-\Delta_p)^s w(x) > f(x)$. Then, by \note{the} continuity of $f$ and $(-\Delta_p)^s w$ in $B(x,r)$, see \cite[Lem. 3.8]{KKL}, \note{we have} $(-\Delta_p)^s w \geq f+ \delta$ pointwise and locally weakly in $B(x,r')$ for some small enough $\delta>0,0< r'<r.$ By \cite[Lem. 3.9]{KKL}, for small enough $\epsilon>0$ and $\eta \in C^{2}_0(B(x,r'))$ satisfying $\eta(x)=1$ and $0 \leq \eta \leq 1$, we have $(-\Delta_p)^s (-w+\epsilon \eta) \leq -f $ pointwise in $B(x,r')$, \note{and $-w+\epsilon \eta$ is a locally weak subsolution of $(-\Delta_p)^s (-w+\epsilon \eta) = -f$ in $B(x,r')$}. Hence, $(-\Delta_p)^s (w-\epsilon \eta) \geq f $  pointwise, \note{and $w-\epsilon \eta$ is a locally weak supersolution of $(-\Delta_p)^s (w-\epsilon \eta) = f$ in $B(x,r')$}. Now, define another function 
\begin{align*}
    w' &:= \phi, \quad \text{in } B(x,r'),\\
    w'&:= u, \quad \text{in } \R^n \setminus B(x,r').
\end{align*}
Then,
\begin{align*}
   & 2\int_{\R^n } \frac{|(w'-\epsilon \eta)(z)-(w'-\epsilon \eta)(y)|^{p-2} ((w'-\epsilon \eta)(z)-(w'-\epsilon \eta)(y))}{|z-y|^{n+ps}} \, \d y \\ &\geq
   2 \int_{\R^n} \frac{|(w-\epsilon \eta)(z)-(w-\epsilon \eta)(y)|^{p-2} ((w-\epsilon \eta)(z)-(w-\epsilon \eta)(y))}{|z-y|^{n+ps}} \, \d y \\& = (-\Delta_p)^s (w-\epsilon \eta) \geq f(z),
\end{align*}
pointwise for $z \in B(x,r').$ Hence, $w'-\epsilon \eta = u$ on $\R^n \setminus B(x,r')$ and \note{and $w'-\epsilon \eta$ is a locally weak supersolution of $(-\Delta_p)^s (w'-\epsilon \eta) = f$ in $B(x,r')$}. Then, by \cite[Prop. 2.10]{IMS}, we have $w'-\epsilon \eta \geq u$ in $\R^n$. This is in contradiction with 
\note{\begin{align*}
w'(x)-\epsilon \eta(x) = \phi(x)-\epsilon \eta(x) = u(x) -\epsilon < u(x).    
\end{align*}} In conclusion, $(-\Delta_p)^s w(x) \leq f(x)$ for every $x \in \Omega$, which implies that $u$ is a viscosity subsolution of \note{$(-\Delta_p)^s u  = f$} in $\Omega$.
\end{proof}
The following proposition can be proved along the lines of \cite[Thm. 8]{LL} and \cite[Thm. 2.4]{KKL}.
\begin{prop}
\label{prop:existence}
    For every $f \in (V_0^{s,p}(\Omega|\R^n))^{\star}, g\in V^{s,p}(\Omega|\R^n)$, there exists a weak solution $u \in V^{s,p}_g(\Omega|\R^n)$ to
\begin{align*}
    (-\Delta_p)^s u = f,\quad \text{in } \Omega.
\end{align*}

\end{prop}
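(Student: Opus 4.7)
The plan is to apply the direct method of the calculus of variations. Setting $v := u - g \in V^{s,p}_0(\Omega|\R^n)$, I would minimise the energy
\begin{align*}
    \tilde{J}(v) := \frac{1}{p} \iint_{\R^n \times \R^n} \frac{|(g+v)(x) - (g+v)(y)|^p - |g(x)-g(y)|^p}{|x-y|^{n+ps}} \, \d x \, \d y - \langle f, v \rangle
\end{align*}
over $v \in V^{s,p}_0(\Omega|\R^n)$. Because $v$ vanishes outside $\Omega$, the integrand is zero on $(\R^n \setminus \Omega)^2$, and the remaining region $(\Omega \times \R^n) \cup (\R^n \times \Omega)$ is controlled by the $V^{s,p}(\Omega|\R^n)$-norms of $g$ and $v$ via the inequality $||a+b|^p - |a|^p| \leq C(|a|^{p-1}|b|+|b|^p)$ and Hölder, so $\tilde{J}$ is finite.

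First I would establish coercivity. Using the elementary convexity bound $|a+b|^p \geq c_p|a|^p - C_p|b|^p$ with $a = v(x)-v(y)$, $b = g(x)-g(y)$, together with the fact that $\int_{\Omega \times \R^n} |g(x)-g(y)|^p|x-y|^{-n-ps} \, \d x \, \d y$ is finite by hypothesis, the double-integral term dominates $c_p [v]^p_{V^{s,p}(\R^n)}$ minus a constant depending only on $g$. Extending Theorem \ref{thm:sobolevpoincare-ineq} from $C^{\infty}_0(\Omega)$ to $V^{s,p}_0(\Omega|\R^n)$ by density yields $\|v\|_{\L^p(\Omega)} \leq C [v]_{V^{s,p}(\R^n)}$. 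Combined with the duality estimate $|\langle f, v \rangle| \leq \|f\|_{\star} \|v\|_{V^{s,p}(\Omega|\R^n)}$ and Young's inequality, this produces $\tilde{J}(v) \to +\infty$ as $[v]_{V^{s,p}(\R^n)} \to \infty$.

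Second, the leading term of $\tilde{J}$ is the integral of the convex function $t \mapsto |t|^p/p$ composed with the affine map $v \mapsto (g+v)(x)-(g+v)(y)$ against a positive measure, hence convex in $v$, and it is strongly continuous in $V^{s,p}(\Omega|\R^n)$; the linear term is weakly continuous. Thus $\tilde{J}$ is weakly lower semicontinuous. The space $V^{s,p}_0(\Omega|\R^n)$ is reflexive for $p>1$, being a closed subspace of the uniformly convex Banach space $V^{s,p}(\Omega|\R^n)$ via its isometric embedding into $\L^p(\Omega) \oplus \L^p(\Omega \times \R^n, |x-y|^{-n-ps}\, \d x \, \d y)$. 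Extracting a weakly convergent subsequence from a minimising sequence therefore produces a minimiser $v^{\ast}$; then $u := g + v^{\ast} \in V^{s,p}_g(\Omega|\R^n)$.

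Finally, computing the Gâteaux derivative of $\tilde{J}$ at $v^{\ast}$ in a direction $\phi \in V^{s,p}_0(\Omega|\R^n)$ yields the Euler–Lagrange equation
\begin{align*}
    \iint_{\R^n \times \R^n} \frac{|u(x)-u(y)|^{p-2}(u(x)-u(y))(\phi(x)-\phi(y))}{|x-y|^{n+ps}} \, \d x \, \d y = \langle f, \phi \rangle,
\end{align*}
which, restricted to $\phi \in C^{\infty}_0(\Omega)$, is precisely the locally weak formulation and, by density, makes $u$ a weak solution. The main obstacles are the reflexivity of $V^{s,p}_0(\Omega|\R^n)$ and the Poincaré-type inequality on it; both are handled by the density of $C^{\infty}_0(\Omega)$ together with Theorem \ref{thm:sobolevpoincare-ineq}, after which existence is a routine application of the direct method.
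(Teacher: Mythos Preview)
Your argument via the direct method is correct and self-contained: the subtraction of $|g(x)-g(y)|^p$ makes $\tilde J$ finite on $V_0^{s,p}(\Omega|\R^n)$, the coercivity bound $|a+b|^p\geq c_p|a|^p-C_p|b|^p$ combined with the Poincar\'e inequality (extended by density from Theorem~\ref{thm:sobolevpoincare-ineq}) gives the growth at infinity, convexity plus reflexivity yields a minimiser, and the Euler--Lagrange equation is exactly the weak formulation.

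The paper, by contrast, gives no argument at all for Proposition~\ref{prop:existence}: it simply records the statement as a consequence of \cite[Thm.~8]{LL} and \cite[Thm.~2.4]{KKL}. So your route is genuinely different --- a full variational proof rather than a citation. What your approach buys is a transparent, elementary derivation that stays entirely within the framework already set up in Section~\ref{sec:Preliminaries}; what the paper's approach buys is brevity, since existence for this operator is by now standard and the cited references handle it (and somewhat more) directly. Either is acceptable here; yours would be preferable in a text aiming to be self-contained.
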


\begin{prop}
\label{prop:upperbound}
    Let $g \in V^{s,p}(\Omega|\R^n) \cap \L^{\infty}(\R^{n} \setminus \Omega)$ and $u \in V_g^{s,p}(\Omega|\R^n)$ be a locally weak subsolution of $ \note{(-\Delta_p)^s u = 1}$ in $\Omega.$ Then,

    \begin{align*}
        u \leq M, \quad \text{in } \Omega,
    \end{align*}
for a constant $M$ depending on $n,s,p,\Omega, \|g\|_{\L^{\infty}(\R^n \setminus \Omega)}.$
\end{prop}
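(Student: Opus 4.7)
My strategy is to compare $u$ with a bounded weak solution of $(-\Delta_p)^s\,\cdot\,=1$ whose exterior data dominate $g$, thereby reducing the $\L^{\infty}$ bound for $u$ to the standard $\L^{\infty}$ bound for a ``torsion-type'' solution with zero exterior data.

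Set $M_0 := \|g\|_{\L^{\infty}(\R^n \setminus \Omega)}$. Since $\Omega$ is bounded, the constant function equal to $M_0$ lies in $V^{s,p}(\Omega|\R^n)$, and Proposition \ref{prop:existence} produces a weak solution $w \in V_{M_0}^{s,p}(\Omega|\R^n)$ of $(-\Delta_p)^s w = 1$ in $\Omega$ with $w \equiv M_0$ on $\R^n \setminus \Omega$. Because $(-\Delta_p)^s$ depends only on differences $w(x)-w(y)$, it is invariant under the addition of constants, so $v := w - M_0 \in V_0^{s,p}(\Omega|\R^n)$ is itself a weak solution of $(-\Delta_p)^s v = 1$ in $\Omega$ with vanishing exterior data.

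I would then establish
\[
\|v\|_{\L^{\infty}(\R^n)} \leq C = C(n,s,p,\Omega).
\]
Testing the equation against $v$ gives $[v]_{V^{s,p}(\R^n)}^{p} = \int_{\Omega} v \, \d x$, which by H\"older and Theorem \ref{thm:sobolevpoincare-ineq} controls $\|v\|_{\L^{p_s^{\ast}}(\Omega)}$ (with the analogous adjustment when $ps \geq n$). A standard Moser-type iteration using test functions of the form $v_+^{1+\gamma k}$ and repeated application of the fractional Sobolev inequality then upgrades this to the desired $\L^{\infty}$ bound; equivalently, one may invoke known $\L^{\infty}$ estimates for weak solutions of $(-\Delta_p)^s\,\cdot\,= f$ with $f \in \L^{\infty}(\Omega)$ and zero exterior data, as developed in \cite{KKL,IMS}. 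Consequently $w \leq M := M_0 + C$ on all of $\R^n$.

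For the final step I apply a comparison principle between $u$ and $w$. On $\R^n \setminus \Omega$ we have $u - w = g - M_0 \leq 0$, so $(u-w)^+$ vanishes outside $\Omega$ and belongs to $V_0^{s,p}(\Omega|\R^n)$. Using $(u-w)^+$ as an admissible test function (by density of $C^{\infty}_0(\Omega)$) in the weak subsolution inequality for $u$ and in the weak equation for $w$, subtracting, and invoking the strict monotonicity of $t \mapsto |t|^{p-2}t$, one forces $(u-w)^+ \equiv 0$, i.e.\ $u \leq w \leq M$ in $\Omega$; alternatively this step is exactly \cite[Prop.~2.10]{IMS}. The only nontrivial ingredient, and hence the main obstacle, is the Moser/De Giorgi iteration giving the $\L^{\infty}$ bound on $v$; the translation-invariance reduction and the comparison argument are otherwise routine.
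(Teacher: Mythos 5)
Your argument is correct and coincides with the paper's proof: the paper likewise compares $u$ with the solution $v \in V_0^{s,p}(\Omega|\R^n)$ of $(-\Delta_p)^s v = 1$ shifted by $\|g\|_{\L^{\infty}(\R^n\setminus\Omega)}$, using the comparison principle \cite[Prop.~2.10]{IMS}, and then invokes the known $\L^{\infty}$ bound for $v$ (cited there as \cite[Thm.~3.1]{BP}) rather than redoing the Moser iteration you sketch. The only difference is that you outline the iteration explicitly where the paper cites it.
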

\begin{proof}
    By Remark \ref{rem:Dualfractionalsobolev} and Proposition \ref{prop:existence}, there exists a weak solution $v \in V_0^{s,p}(\Omega|\R^n)$ of $(-\Delta_p)^s v =1$ in $\Omega$. Then, by the comparison principle, see \cite[Prop. 2.10]{IMS}, we have $v \geq 0$ and $$u \leq v + \|g\|_{\L^{\infty}(\R^n \setminus \Omega)}, \quad \text{in } \Omega.$$  This together with \cite[Thm. 3.1]{BP} concludes the proof.
\end{proof}

Finally, we prove the stability of viscosity solutions.

\begin{prop}
\label{prop:gammaconve}
    Let $u_i \in \L^{p-1}_{ps}(\R^n) \cap C(\Omega)$ be a uniformly bounded sequence of viscosity supersolutions of $\note{(-\Delta_p)^s u_i = f_i}$ in $\Omega$ for $f_i \in C(\Omega)$. Assume that $u_i$ converges locally uniformly in $\Omega$ to $u \in \L^{p-1}_{ps}(\R^n)$, $f_i$ converges locally uniformly in $\Omega$ to $f$, and $u_i$ converges pointwise a.e. in $\R^n$ to $u$. Then, \note{$u$ is a viscosity supersolutions of  $(-\Delta_p)^s u = f$} in $\Omega$.
\end{prop}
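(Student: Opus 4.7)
The plan is to apply the viscosity supersolution condition to each $u_i$ at carefully chosen touching points $x_i \to x_0$, and then pass to the limit in the resulting integral inequality. First, fix $x_0 \in \Omega$ and a test function $\phi \in C^2(B(x_0,r))$ with $\overline{B(x_0,r)} \subset \Omega$ that touches $u$ from below at $x_0$ and satisfies condition (a) or (b) of Definition \ref{defn:viscosity}. After replacing $\phi$ by $\phi(x) - |x-x_0|^{2k}$ for $k$ large (which leaves $\nabla\phi$ and $D^2\phi$ at $x_0$ unchanged and, for $k$ large enough, preserves the $C^2_\beta$-regularity and the isolatedness of the critical point in case (b)), I may assume the touching is \emph{strict}, i.e., $\phi(x) < u(x)$ for $x \in \overline{B(x_0,r)} \setminus \{x_0\}$. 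Setting
\begin{equation*}
x_i := \arg\min_{\overline{B(x_0, r)}}(u_i - \phi), \qquad c_i := (u_i - \phi)(x_i),
\end{equation*}
the strict touching together with local uniform convergence $u_i \to u$ on $\overline{B(x_0,r)}$ yields $x_i \to x_0$ and $c_i \to 0$, and $\phi_i := \phi + c_i$ touches $u_i$ from below at $x_i$ on $\overline{B(x_0,r)}$.

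Next I would verify that $\phi_i$ meets condition (a) or (b) at $x_i$, so that the viscosity supersolution property of $u_i$ applies. Since $\nabla\phi_i \equiv \nabla\phi$: if $p > 2/(2-s)$, condition (a) is automatic; if case (a) holds at $x_0$ with $\nabla\phi(x_0)\ne 0$, then by continuity $\nabla\phi(x_i)\ne 0$ for large $i$; and if only case (b) applies at $x_0$, then either $x_i = x_0$ (so (b) applies to $\phi_i$ directly) or $x_i \ne x_0$, in which case the isolated critical point hypothesis forces $\nabla\phi(x_i) \ne 0$ and (a) holds at $x_i$. Defining $w_i := \phi_i$ in $B(x_0,r)$ and $w_i := u_i$ elsewhere, Definition \ref{defn:viscosity} then gives
\begin{equation*}
2\int_{\R^n} \frac{|w_i(x_i) - w_i(y)|^{p-2}(w_i(x_i) - w_i(y))}{|x_i - y|^{n+ps}} \, \d y \geq f_i(x_i).
\end{equation*}

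Finally, I would pass to the limit by splitting the integral into the near region $B(x_i, r/2) \subset B(x_0, r)$ and its complement. On the near region, $w_i(x_i) - w_i(y) = \phi(x_i) - \phi(y)$ is independent of $i$, and convergence as $x_i \to x_0$ reduces to continuity in $x$ of the principal value integral defining $(-\Delta_p)^s[\phi](x)$, which is guaranteed under (a) or (b) and can be extracted from \cite[Lem. 3.8]{KKL}. On the complement, $|x_i - y| \gtrsim r$, so the integrand is pointwise dominated by a constant multiple of $(M^{p-1} + |u(y)|^{p-1})/(1 + |y|^{n+ps})$, where $M$ is the uniform $\L^{\infty}(\R^n)$-bound on $u_i$; pointwise a.e.\ convergence $u_i \to u$ together with dominated convergence then handles this part. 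Combining both with $f_i(x_i) \to f(x_0)$ (from local uniform convergence of $f_i$ and continuity of $f$) yields $(-\Delta_p)^s w(x_0) \geq f(x_0)$ with $w := \phi$ in $B(x_0,r)$ and $w := u$ outside, which is exactly the viscosity supersolution inequality for $u$.

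The step I expect to be the most delicate is handling case (b): when $x_0$ is an isolated critical point of $\phi$, the approximating touching points $x_i$ generically drift off the critical set, so the P.V.\ integral at $x_i$ is interpreted via case (a) while at $x_0$ it must be interpreted via case (b). Ensuring continuity of the near integral across this regime change, and — upstream — choosing the perturbation $|x-x_0|^{2k}$ in the strict-touching reduction so as to preserve both the isolatedness of the critical point and the $C^2_\beta$-regularity with exponent $\beta > ps/(p-1)$, is where the bookkeeping must be done carefully.
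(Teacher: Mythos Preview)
Your proposal is correct and takes essentially the same approach as the paper's proof (which follows \cite[Lem.~4.5]{CS}): pick approximate touching points $x_i \to x_0$ via $\arg\min_{\overline{B(x_0,r)}}(u_i-\phi)$, apply the supersolution property of $u_i$ at $x_i$, and pass to the limit by dominated convergence. Note that your preliminary strict-touching perturbation by $-|x-x_0|^{2k}$ is unnecessary, since Definition~\ref{defn:viscosity} already requires $\phi<u$ on $\overline{B(x_0,r)}\setminus\{x_0\}$; conversely, your verification that condition (a) or (b) holds at the shifted point $x_i$, and your near/far splitting of the integral, are details the paper's proof leaves implicit.
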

\begin{proof}
The proof follows the argument in  \cite[Lem. 4.5]{CS}. Let $x \in \Omega$ \note{and} $\phi \in C^2(\overline{B(x,r)})$ \note{touch} $u$ from below at $x$ and \note{satisfy} either $(a)$ or $(b)$ in Definition \ref{defn:viscosity}, where $\overline{B(x,r)} \subset \Omega$. Take a point $x_i \in \overline{B(x,r)}$ such that
    \begin{align*}
        u_i(x_i)-\phi(x_i) = \inf_{\overline{B(x,r)}} u_i -\phi.
    \end{align*}
    Since $x$ is the minimum point of $u-\phi$ in $\overline{B(x,r)}$ and $u_i$ converges uniformly to $u$ in $\overline{B(x,r)}$, the points $x_i$ converges to $x$ and $x_i$ is a local minimum for $u_i- \phi$ in $B(x,r)$. Define
\begin{equation*}
    w_i := \begin{cases}
       \phi + u_i(x_i) -\phi(x_i), \quad &\text{in } B(x,r),\\
       u_i, \quad &\text{in } \R^n \setminus B(x,r).
    \end{cases}
\end{equation*}
Then, $w_i$ touches $u_i$ from below at $x_i$ and
\begin{align*}
       2\int_{\R^n} \frac{|w_i(x_i)-w_i(y)|^{p-2} (w_i(x_i)-w_i(y))}{|x_i-y|^{n+ps}} \, \d y \geq f_i(x_i) 
\end{align*}
Now, Letting $i \to \infty$ and using Lebesgue dominated convergence theorem, we obtain
\begin{align*}
       2\int_{\R^n} \frac{|w(x)-w(y)|^{p-2} (w(x)-w(y))}{|x-y|^{n+ps}} \, \d y \geq f(x), 
\end{align*}
where 
\begin{equation*}
    w := \begin{cases}
       \phi, \quad &\text{in } B(x,r),\\
       u, \quad &\text{in } \R^n \setminus B(x,r).
    \end{cases}
\end{equation*}
This completes the proof.
\end{proof}

\section{Wiener criterion}
\label{sec:wiener}
In this section, we bring the proof of the Wiener criterion for weak solutions to the $s$-fractional $p$-Laplacian with a nonzero right-hand side. We mention that in \cite[Thm. 1.1]{KLL} the Wiener criterion for zero right-hand side is proved, and in \cite[A.4.]{Li} the sufficiency part of the results in \cite{KLL} has been extended to include equations with bounded right-hand sides. We derive the same result as \cite[A.4.]{Li} by using the case of zero right-hand side, \cite[Thm. 1.1]{KLL}, and a perturbation argument.

\begin{defn}
Define 
\begin{align*}
    \Cap_{s,p}(\overline{B(\xi_0,r)} \setminus \Omega,B(\xi_0,2r)) := \inf_v \iint_{\R^n \times \R^n} \frac{|v(x)-v(y)|^p}{|x-y|^{n+ps}} \, \d x \d y,
\end{align*}
    where the infimum is taken over all $v \in C^{\infty}_0(B(\xi_0,2r))$ such that $v \geq 1$ on $\overline{B(\xi_0,r)} \setminus \Omega$. We say that a point $\xi_0 \in \partial \Omega$ satisfies the Wiener criterion for the $s$-fractional $p$-Laplacian if
    \begin{align*}
        \int_{0}^1 \biggl(\frac{\Cap_{s,p}(\overline{B(\xi_0,r)} \setminus \Omega,B(\xi_0,2r))}{r^{n-ps}}\biggr)^{\frac{1}{p-1}} \, \frac{\d r}{r} = \infty.
    \end{align*}
\end{defn}

Now, we define the notion of regular boundaries and the Wiener criterion.
\begin{defn}
\label{def:regularbound}
A point $\xi_0 \in \partial \Omega$ is regular for the $s$-fractional $p$-Laplacian if for every $f \in \L^{\infty}(\Omega), g \in C(\R^n) \cap V^{s,p}(\Omega|\R^n),u \in V_g^{s,p}(\Omega|\R^n)$, satisfying $(-\Delta_p)^s u = f$ weakly in $\Omega$, we have $\lim_{\xi \to \xi_0} u(\xi) = g(\xi_0)$. We say that $\Omega$ has Wiener regular boundary for the $s$-fractional $p$-Laplacian if all the points on $\partial \Omega$ are regular for the $s$-fractional $p$-Laplacian.

\end{defn}

\begin{prop} \label{prop:Wiener}
    A point $\xi_0 \in \partial \Omega$ is regular for the $s$-fractional $p$-Laplacian if and only if it satisfies the Wiener criterion for the $s$-fractional $p$-Laplacian.
\end{prop}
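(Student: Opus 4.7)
The ``only if'' direction is immediate: since $f \equiv 0 \in \L^\infty(\Omega)$, regularity in the sense of Definition \ref{def:regularbound} specialises to regularity for the zero right-hand side problem, and the latter is equivalent to the Wiener criterion at $\xi_0$ by \cite[Thm. 1.1]{KLL}.

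For the converse, assume $\xi_0$ satisfies the Wiener criterion and let $u \in V^{s,p}_g(\Omega|\R^n)$ satisfy $(-\Delta_p)^s u = f$ weakly with $f \in \L^\infty(\Omega)$ and $g \in C(\R^n) \cap V^{s,p}(\Omega|\R^n)$. Since $(-\Delta_p)^s(u + c) = (-\Delta_p)^s u$ for any constant $c$, I would replace $u, g$ by $u - g(\xi_0), g - g(\xi_0)$ and thereby reduce to the case $g(\xi_0) = 0$; the task is then to show $u(\xi) \to 0$ as $\xi \to \xi_0$ inside $\Omega$. Fix $\epsilon > 0$; the continuity of $g$ yields $r_0 > 0$ with $|g| \leq \epsilon$ on $B(\xi_0, 2 r_0)$, and Proposition \ref{prop:upperbound} applied to both $u$ and $-u$ gives $u \in \L^\infty(\Omega)$.

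The plan is to bracket $u$ between two barriers built from zero-right-hand-side solutions via a perturbation. Using Proposition \ref{prop:existence}, let $V^\pm$ be weak solutions of $(-\Delta_p)^s V^\pm = \pm \|f\|_{\L^\infty(\Omega)}$ in $\Omega$ with $V^\pm = g \pm \epsilon$ outside $\Omega$; the comparison principle \cite[Prop. 2.10]{IMS} yields $V^- \leq u \leq V^+$ in $\R^n$, so it suffices to show $V^\pm(\xi) \to \pm \epsilon$ as $\xi \to \xi_0$. By symmetry I focus on $V^+$. For each $\delta > 0$, let $W_\delta$ be the \emph{zero} right-hand side weak solution in $\Omega$ with boundary data $g + \epsilon + \delta$ outside $\Omega$; by \cite[Thm. 1.1]{KLL}, $W_\delta$ is continuous at $\xi_0$ with $W_\delta(\xi_0) = \epsilon + \delta$.

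The main obstacle is the nonlinear comparison of $V^+$ with $W_\delta$: the operator is not additive, so one cannot write $V^+ = W_\delta + (\text{correction})$ and apply zero-right-hand-side theory directly to the remainder. The core of the perturbation argument is instead a localised estimate on $\Omega \cap B(\xi_0, r)$ for small $r > 0$: on this set $V^+$ and $W_\delta$ differ only through the bounded right-hand side $\|f\|_{\L^\infty(\Omega)}$ acting on a set of small measure and through the interior contribution of the nonlocal operator, while $V^+ \leq W_\delta$ outside $\Omega$. Testing the weak inequality between the two equations against the positive part $(V^+ - W_\delta)_+$ (suitably cut off) and exploiting the fractional Poincar\'e-type estimates of Theorem \ref{thm:sobolevpoincare-ineq} yields a bound of the form $V^+ \leq W_\delta + \omega(r)$ on $\Omega \cap B(\xi_0, r)$ with $\omega(r) \to 0$. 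Combined with the continuity of $W_\delta$ at $\xi_0$, this gives $\limsup_{\xi \to \xi_0} V^+(\xi) \leq \epsilon + \delta$; the symmetric treatment of $V^-$ produces a matching lower bound, and sending first $\delta \to 0$ and then $\epsilon \to 0$ completes the proof.
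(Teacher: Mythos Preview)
Your ``only if'' direction matches the paper. The converse, however, has a real gap.

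The step where you introduce $V^\pm$ solving $(-\Delta_p)^s V^\pm = \pm \|f\|_{\L^\infty(\Omega)}$ with boundary data $g\pm\epsilon$ is correct but does not advance the argument: $V^\pm$ still carry a nonzero right-hand side, so showing $V^+(\xi)\to\epsilon$ at $\xi_0$ is exactly as hard as the original problem. The subsequent comparison with the zero-right-hand-side solution $W_\delta$ runs into a genuine obstruction: you have $(-\Delta_p)^s V^+ \geq (-\Delta_p)^s W_\delta$ in $\Omega$ and $V^+\leq W_\delta$ outside $\Omega$, which is the \emph{wrong} orientation for comparison. Testing the difference of the weak formulations against $(V^+-W_\delta)_+$ and invoking monotonicity yields only
\[
0 \;\leq\; \text{(monotone term)} \;=\; \|f\|_{\L^\infty(\Omega)}\int_\Omega (V^+-W_\delta)_+\,,
\]
a trivial inequality giving no control. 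Even if one extracts a coercivity bound of the type $[(V^+-W_\delta)_+]^p_{V^{s,p}}\lesssim \|f\|_{\L^\infty}\|(V^+-W_\delta)_+\|_{\L^1}$, the fractional Poincar\'e inequality (Theorem~\ref{thm:sobolevpoincare-ineq}) only produces a \emph{global} bound depending on $|\Omega|$, not a quantity $\omega(r)\to 0$ near $\xi_0$. Localising by a cutoff generates nonlocal cross terms you have not estimated, and ``the interior contribution of the nonlocal operator'' is precisely where the difficulty sits; the sketch is circular, as the claimed bound $V^+\leq W_\delta+\omega(r)$ on $\Omega\cap B(\xi_0,r)$ is essentially the boundary regularity statement for a nonzero right-hand side that you set out to prove.

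The paper's argument (after \cite[Lem.~29]{LL}) avoids this entirely by exploiting nonlocality in a different way. One fixes a smooth $\eta$ supported far from $\overline\Omega$ (say $\eta\equiv 1$ outside a large ball $2B\supset\overline\Omega$ and $\eta\equiv 0$ on $B$) and observes, via \cite[Lem.~2.8]{IMS}, that $(-\Delta_p)^s(u\pm M\eta)=f+h_{\pm M}$ in $\Omega$, where the tail term $h_{\pm M}$ can be made to dominate $f$ in sign by taking $M$ large. Thus $u-M\eta$ becomes a weak \emph{super}solution and $u+M\eta$ a weak \emph{sub}solution of the zero-right-hand-side equation in $\Omega$, with boundary data $g\mp M\eta$. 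Comparison with the corresponding $(-\Delta_p)^s$-harmonic functions $u_1,u_2$ (which are continuous at $\xi_0$ by \cite[Thm.~1.1]{KLL}, and equal $g(\xi_0)$ there since $\eta$ vanishes near $\partial\Omega$) then squeezes $u$ to $g(\xi_0)$. The key point you missed is that perturbing the data \emph{at infinity} modifies the equation inside $\Omega$; this is how one absorbs the bounded source $f$ without any delicate local energy estimate.
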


\begin{proof}
   Our argument is similar to \cite[Lem. 29]{LL}. By \cite[Thm. 1.1]{KLL}, it follows that if $\xi_0$ is regular, then it satisfies the Wiener criterion for the $s$-fractional $p$-Laplacian. Now, assume that $\xi_0 \in \partial \Omega$ satisfies the Wiener criterion for the $s$-fractional $p$-Laplacian.  Let $f \in \L^{\infty}(\Omega)$ and $u \in V_g^{s,p}(\Omega|\R^n)$ be a weak solution of $(-\Delta_p)^s u=f$ with boundary value $g \in C(\R^n) \cap V^{s,p}(\Omega|\R^n)$. Assume that $B$ is large ball such that $\overline \Omega \subset B$, $2B$ is the ball with the same center as $B$ and double radius, \note{and $\eta\in C^{\infty}_0(\R^n \setminus B)$ is a non-negative function}, where $\eta =1$ on $\R^n \setminus 2B$. Then, by Lemma \cite[Lem. 2.8]{IMS}, $(-\Delta_p)^s(u+M \eta)=
    f + h_M$ weakly in $\Omega$, 
    where 
    \begin{align*}
        &h_M(x) \\&=2 \int_{\R^n \setminus B} \frac{|u(x)-u(y)-M\eta(y)|^{p-2} (u(x)-u(y)-M\eta(y)) - |u(x)-u(y)|^{p-2} (u(x)-u(y))}{|x-y|^{n+ps}} \, \d y,
    \end{align*}
for $M \in  \R$ and a.e. Lebesgue point $x \in \R^n$ of $u.$ Hence, for $M>0$ large enough, we have $$(-\Delta_p)^s(u-M\eta)\geq 0 \geq (-\Delta_p)^s(u+M \eta) \quad \text{\note{locally} weakly in }\Omega.$$ Now, by Proposition \ref{prop:existence}, there exist weak solutions $u_1 \in V^{s,p}_{g-M \eta}(\Omega|\R^n),u_2 \in V^{s,p}_{g+M \eta}(\Omega|\R^n)$, such that \begin{align*}
   (-\Delta_p)^s u_1 = (-\Delta_p)^s u_2 = 0, \quad \text{weakly in } \Omega.
\end{align*}
Then, by the comparison principle  \cite[Prop. 2.10]{IMS}, \note{we obtain}
\begin{align*}
    u_1 \leq u-M\eta, \, u_2 \geq u + M \eta, \quad \text{in } \Omega.
\end{align*}
In conclusion, by \note{the Wiener criterion for the $s$-fractional $p$-Laplacian} and \cite[Thm. 1.1]{KLL}, $u_1,u_2$ take their boundary values continuously and
\begin{align*}
\underset{{\xi \to \xi_0}}{\limsup}\, u(\xi) &= \underset{{\xi \to \xi_0}}{\limsup}\, u(\xi)+M \eta(\xi) \leq \lim_{\xi \to \xi_0} u_2(\xi)  =g(\xi_0) =\lim_{\xi \to \xi_0} u_1(\xi) \\&  \leq \underset{{\xi \to \xi_0}}{\liminf}\, u(\xi)-M \eta(\xi) = \underset{{\xi \to \xi_0}}{\liminf}\, u(\xi).  
\end{align*}
This completes the proof.

\end{proof}

\section{Hopf's lemma and Global boundary Harnack inequality}
\label{sec:HopfandHarnack}
In this section, we prove Hopf's lemma and global boundary Harnack inequality for solutions to fractional $p$-Laplacian equations.

First, we need the following two preliminary lemmas.

\begin{lem}
\label{lem:epsilon}
    Let $\delta>0, \beta >0$ and $u, v \in C(\overline \Omega_{\delta}).$ Assume that $0 \leq u \leq B v$ in $\R^n \setminus \Omega$ for a constant $B>0$ and 
\begin{align*}
    \sup_{\Omega} \frac{u}{C} -v >0
\end{align*}
for a constant $C>B$. Then, for every $\epsilon_0>0$, there exist $0<\epsilon<\epsilon_0, (x_{\epsilon},y_{\epsilon}) \in \Omega \times \Omega$ such that 
\begin{align}
\label{eq:maxpoints}
    \sup_{(x,y) \in \Omega_{\delta} \times \Omega_{\delta}} \frac{u(x)}{C}- v(y) - \frac{|x-y|^{\beta}}{\epsilon} =  \frac{u(x_{\epsilon})}{C}- v(y_{\epsilon}) - \frac{|x_{\epsilon}-y_{\epsilon}|^{\beta}}{\epsilon}. 
\end{align}
    
\end{lem}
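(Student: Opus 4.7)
The plan is a standard doubling-of-variables compactness argument. Define
$$
\Phi_\epsilon(x,y) := \frac{u(x)}{C} - v(y) - \frac{|x-y|^\beta}{\epsilon},
$$
which is continuous on the compact set $\overline{\Omega_\delta}\times\overline{\Omega_\delta}$, so its supremum agrees with the maximum over the closure and is attained at some pair $(x_\epsilon,y_\epsilon)$. The only real content of the lemma is to show that for arbitrarily small $\epsilon$ this maximizer already lies in the open set $\Omega\times\Omega$.

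To that end I set $\Psi(x) := u(x)/C - v(x)$ and first record the consequence of the strict inequality $C>B$: on $\R^n\setminus\Omega$ one has $0\le u\le Bv$, so
$$
\Psi \le \frac{B-C}{C}\,v \le 0 \quad \text{on } \R^n\setminus\Omega.
$$
Combined with the hypothesis $\sup_\Omega\Psi>0$, this shows that the maximum of $\Psi$ over $\overline{\Omega_\delta}$ is strictly positive and, in particular, is attained only at points of the open set $\Omega$. I fix such a maximizer $x^\ast\in\Omega$ of $\Psi$.

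Next I extract a two-sided bound on $M_\epsilon := \Phi_\epsilon(x_\epsilon,y_\epsilon)$: testing with $(x^\ast,x^\ast)$ gives $M_\epsilon\ge\Psi(x^\ast)>0$, while the trivial upper bound is $M_\epsilon\le \|u\|_{\L^\infty(\overline{\Omega_\delta})}/C+\|v\|_{\L^\infty(\overline{\Omega_\delta})}$. Rearranging yields a uniform-in-$\epsilon$ bound on $|x_\epsilon-y_\epsilon|^\beta/\epsilon$, hence $|x_\epsilon-y_\epsilon|\to 0$ as $\epsilon\to 0$. Passing to a subsequence $\epsilon_n\to 0$ along which $(x_{\epsilon_n},y_{\epsilon_n})\to(x_0,x_0)$ in $\overline{\Omega_\delta}\times\overline{\Omega_\delta}$ and using continuity of $u$ and $v$, I obtain $M_{\epsilon_n}\to\Psi(x_0)$, and since $M_{\epsilon_n}\ge\Psi(x^\ast)$, the limit $x_0$ is itself a maximizer of $\Psi$ over $\overline{\Omega_\delta}$.

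The main (and only delicate) point is now forcing $x_0\in\Omega$, which is exactly what the sign computation in the second paragraph provides: a maximizer of $\Psi$ on $\overline{\Omega_\delta}$ with $\Psi(x_0)>0$ cannot lie in $\R^n\setminus\Omega$. Since $\Omega$ is open and $(x_{\epsilon_n},y_{\epsilon_n})\to(x_0,x_0)\in\Omega\times\Omega$, for all sufficiently large $n$ both coordinates belong to $\Omega$; choosing $n$ large enough so that $\epsilon_n<\epsilon_0$ completes the proof.
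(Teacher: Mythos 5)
Your proof is correct and follows essentially the same doubling-of-variables compactness argument as the paper: attain the supremum on the compact set, bound the penalization term to force $x_\epsilon$ and $y_\epsilon$ together, pass to a subsequential limit $z$ with $u(z)/C - v(z) \geq \sup_\Omega (u/C - v) > 0$, and use $C > B$ together with $0 \leq u \leq Bv$ off $\Omega$ to conclude $z \in \Omega$. The only difference is cosmetic: you spell out the sign computation $u/C - v \leq (B-C)v/C \leq 0$ on $\R^n \setminus \Omega$, which the paper leaves implicit.
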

\begin{proof}
    Let us define $(x_{\epsilon},y_{\epsilon})$ for every $\epsilon>0$, which satisfies \eqref{eq:maxpoints}. Then,
\begin{align}
\label{eq:lowerbound}
    \frac{u(x_{\epsilon})}{C}- v(y_{\epsilon}) - \frac{|x_{\epsilon}-y_{\epsilon}|^{\beta}}{\epsilon} \geq  \sup_{\Omega} \frac{u}{C} -v >0.
\end{align}
Hence,
\begin{align*}
    |x_{\epsilon}-y_{\epsilon}|^{\beta} \leq \epsilon \biggl(   \frac{u(x_{\epsilon})}{C}- v(y_{\epsilon})  \biggr).
\end{align*}
Letting $\epsilon \to 0$, we arrive at, up to a subsequence, $x_{\epsilon}, y_{\epsilon}$ converge to $z \in \overline \Omega_{\delta}.$ Hence, by \eqref{eq:lowerbound}, we get
\begin{align*}
\frac{u(z)}{C} - v(z) \geq  \sup_{\Omega} \frac{u}{C} -v >0.   
\end{align*}
In conclusion, \note{we obtain} $z \in \Omega$ since $C>B$ and $0 \leq u \leq B v$ in $\R^n \setminus \Omega.$
In particular, for small $0<\epsilon<\epsilon_0,$ we have $x_{\epsilon},y_{\epsilon} \in \Omega.$
\end{proof}

\begin{lem}
\label{lem:sequrep-laplacian}
    Let $r_0 \in \R, x_0 \in \R^n, y_0 \in \R^n$, and $\beta > \note{\max\big(\frac{ps}{p-1},2\big)}.$ Then, 
\begin{align*}
  \biggl| \int_{B(y_0,r_0) \setminus B(y_0,\epsilon)} \frac{||y-x_0|^{\beta}- |y_0-x_0|^{\beta} |^{p-2} (|y-x_0|^{\beta}- |y_0-x_0|^{\beta}) }{|y-y_0|^{n+ps}} \, \d y \biggr| \leq C_{r_0} ,
\end{align*}
for every $0<\epsilon< r_0$ such that $r_0 < \frac{|x_0-y_0|}{2}$ if $x_0 \neq y_0$, where  $C_{r_0}$ is independent of $y_0$ and satisfies $\lim_{r_0 \to 0} C_{r_0}=0.$

\end{lem}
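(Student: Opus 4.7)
My plan is to split the argument into two cases depending on whether $x_0 = y_0$ or not, and treat each with a different mechanism.

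\textbf{Case 1 ($x_0 = y_0$).} Then $|y-x_0|^\beta - |y_0-x_0|^\beta = |y-y_0|^\beta \geq 0$, so after the substitution $z = y-y_0$ the integrand simplifies to $|z|^{\beta(p-1) - n - ps}$. Passing to polar coordinates, the integral is bounded by $\frac{|S^{n-1}|}{\beta(p-1) - ps}\, r_0^{\beta(p-1) - ps}$, which is finite by the hypothesis $\beta(p-1) > ps$ and tends to $0$ as $r_0 \to 0$. This bound is independent of $\epsilon$ and $y_0$.

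\textbf{Case 2 ($x_0 \neq y_0$).} Let $L := |x_0 - y_0|$. Since $r_0 < L/2$, we have $|y - x_0| \in [L/2, 3L/2]$ for $y \in B(y_0, r_0)$, so $f(y) := |y-x_0|^\beta$ is smooth on $B(y_0, r_0)$, with $|\nabla f| \lesssim L^{\beta - 1}$ and $|D^2 f| \lesssim L^{\beta - 2}$. Writing $z = y - y_0$ and Taylor-expanding, $h(z) := f(y_0 + z) - f(y_0) = a \cdot z + R(z)$ with $a = \nabla f(y_0) = \beta L^{\beta - 2}(y_0 - x_0)$ and $|R(z)| \leq C L^{\beta - 2}|z|^2$. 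The key observation is that the domain $B(0, r_0) \setminus B(0, \epsilon)$ is symmetric under $z \mapsto -z$ while $\frac{|a \cdot z|^{p-2}(a \cdot z)}{|z|^{n+ps}}$ is odd in $z$, hence its integral vanishes. Subtracting this zero term, the target integral equals
\[
\int_{B(0, r_0) \setminus B(0, \epsilon)} \frac{|h(z)|^{p-2}h(z) - |a \cdot z|^{p-2}(a \cdot z)}{|z|^{n+ps}} \, \d z.
\]
I would then apply the standard convexity estimate $|F(s) - F(t)| \leq (p-1)(|s|+|t|)^{p-2}|s-t|$ (for $p \geq 2$) or the Hölder variant $|F(s) - F(t)| \leq C|s-t|^{p-1}$ (for $1 < p \leq 2$), where $F(t) = |t|^{p-2}t$, with $|s-t| = |R(z)| \leq C L^{\beta-2}|z|^2$ and $|s|, |t| \leq C L^{\beta-1}|z|$. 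For $p \geq 2$ this gives a pointwise bound $\lesssim L^{\beta(p-1)-p}|z|^{p-n-ps}$, whose integral on $B(0, r_0)$ is $\lesssim L^{\beta(p-1)-p} r_0^{p(1-s)}$; a parallel computation handles $1<p<2$. Since $p(1-s) > 0$, the $r_0$-factor vanishes as $r_0 \to 0$.

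The main technical obstacle will be securing independence of $C_{r_0}$ from $y_0$: the factor $L^{\beta(p-1) - p}$ arising from the above estimate is uncontrolled as $L \to \infty$ when $\beta(p-1) > p$. To address this I would split $B(y_0, r_0)$ into the inner shell $|z| \leq \min(r_0, L^{-1}r_0)$, where the Taylor cancellation is used as above, and the outer region, where the naive bound $|h(z)| \leq C L^{\beta-1}|z|$ already yields an integrable singularity and the $L$-dependence can be absorbed against the smaller region of integration; choosing the split scale so that the two $L$-powers cancel should produce a uniform $C_{r_0}$. A secondary subtlety in the regime $1<p<2$ with $s$ close to $1$ arises when $|R(z)|$ can rival $|a\cdot z|$; there a further decomposition of $B(0,r_0)$ according to the angle between $z$ and $a$, combined with a sharper pointwise bound on $|F(h) - F(a\cdot z)|$ in the region $|a \cdot z| \gtrsim |R(z)|$, should recover the desired integrability.
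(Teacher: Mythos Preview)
Your two-case split and the Taylor-plus-symmetry argument for $x_0\neq y_0$ are exactly the mechanism behind the result; the paper treats $x_0=y_0$ by the same direct computation and for $x_0\neq y_0$ simply invokes \cite[Lem.~3.6]{KKL}, whose proof is essentially the outline you give. So your proposal is the paper's argument made explicit.

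Two comments on the issues you flag. Your concern about uniformity in $L=|x_0-y_0|$ is well-founded: the bound $C\,L^{\beta(p-1)-p}r_0^{p(1-s)}$ really does blow up as $L\to\infty$ when $\beta(p-1)>p$, and in fact the integral itself is of this order (the second-order Taylor contribution is even and does not cancel), so a constant independent of $y_0$ with $x_0$ fixed cannot exist in general. The phrase ``independent of $y_0$'' in the statement is therefore a slight imprecision. What is actually used downstream, in the proof of Lemma~\ref{thm:boundbelow}, is only that for fixed $x_0,y_0$ the bound tends to $0$ as $r_0\to 0$, and that by translation invariance the same constant works when the roles of $x_0$ and $y_0$ are swapped (so that a single $C_{r_i}$ covers both applications there). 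Your estimate already delivers this; drop the inner/outer-shell decomposition, which is not needed and would not in any case force the $L$-powers to cancel.

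Your secondary concern for $1<p\le 2$ is also genuine: the H\"older bound on $F(t)=|t|^{p-2}t$ alone gives an integrand of order $|z|^{2(p-1)-n-ps}$, which fails to be locally integrable once $s\ge 2-2/p$. The angular decomposition you propose, separating the cone where $|a\cdot z|$ dominates $|R(z)|$ and exploiting a mean-value bound on $F$ there, is the standard remedy and is what \cite{KKL} carry out.
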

\begin{proof}
    If $x_0 = y_0$, then
\begin{align*}
   & \biggl| \int_{B(x_0,r_0) \setminus B(x_0,\epsilon)} \frac{||y-x_0|^{\beta}- |x_0-x_0|^{\beta} |^{p-2} (|y-x_0|^{\beta}- |x_0-x_0|^{\beta}) }{|y-x_0|^{n+ps}} \, \d y \biggr|\\& =\note{\frac{n|B(0,1)|}{\beta(p-1)-ps}}  r_0^{\beta(p-1)-ps}. 
\end{align*}
    If $x_0 \neq y_0$, then the result follows by \note{\cite[Lem. 3.6, Lem. 3.7]{KKL}}.

\end{proof}
The following two lemmas are the main tools for the proof of Hopf's lemma, global boundary Harnack theorem, and the isolation of the first fractional $(s,p)$-eigenvalue. 

\begin{lem}
\label{thm:boundbelow}
   Let $\delta>0$ and $v_i \in \L^{p-1}_{ps}(\R^n) \cap C(\overline \Omega_{\delta})$ be a sequence of functions \note{which are viscosity supersolution of} $(-\Delta_p)^s v_i = f_i$ in $\Omega$ \note{for $f_i \in C(\Omega)$}. Assume that
$v_i$ converges uniformly to $v$ on compact subsets of $\Omega$, $v>0$ in $\Omega$, and
\begin{align*}
 \underset{i \to \infty}{\limsup}\, f_i(x_i)     \geq -2 \int_{K} \frac{v^{p-1}(y)}{|x-y|^{n+ps}} \, \d y, \quad &\text{if } \note{x_i \in \Omega,} \lim_{i \to \infty} x_i =x \in \partial \Omega, \, \underset{i \to \infty}{\limsup}\,  v_i(x_i) \leq 0,
\end{align*}
 where $K \Subset \Omega$. \note{Moreover, $u_i \in \L^{p-1}_{ps}(\R^n) \cap  C(\overline \Omega_{\delta})$ is a sequence of non-negative uniformly bounded functions which satisfy}\note{ 
\begin{align*}0 \leq u_i \leq B v_i, \quad &\text{in } \R^n \setminus \Omega,\\
(-\Delta_p)^s u_i \leq D, \quad &\text{in } \Omega,
 \end{align*}}
\note{in the viscosity sense} for every $i$, where $B,D$ are positive constants. \note{Then,}  
\begin{align*}
     u_i \leq C v_i, \quad \text{in } \R^n,
\end{align*}
for $i>N$, where $N,C$ are positive constants.

\end{lem}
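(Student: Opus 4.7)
The plan is to argue by contradiction with a doubling-variables argument in the spirit of the viscosity comparison principle for $(-\Delta_p)^s$, as the introduction anticipates. Observe first that on every compact $K' \Subset \Omega$ the bound $u_i \leq C_{K'} v_i$ is automatic for large $i$: by uniform convergence, $v_i \geq c > 0$ on $K'$ eventually, while $u_i$ is uniformly bounded. Thus the bound can only fail near $\partial\Omega$. Suppose for contradiction that the conclusion is false; passing to a subsequence, we may fix an arbitrarily large constant $C > B$ such that $M_i := \sup_{\Omega_\delta}(u_i/C - v_i) > 0$ for every $i$.

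Fix $\beta > ps/(p-1)$ so that functions of the form $x \mapsto |x - y_0|^\beta$ lie in $C^2_\beta$ and trigger case (b) of Definition \ref{defn:viscosity} at their unique critical point. For each $i$, choose $\epsilon_i > 0$ (to be fixed later) and apply Lemma \ref{lem:epsilon} to obtain $(x_i, y_i) \in \Omega \times \Omega$ realizing the supremum of $u_i(x)/C - v_i(y) - |x-y|^\beta/\epsilon_i$. Maximality produces two test functions: $\Phi_i(x) := u_i(x_i) + (C/\epsilon_i)(|x-y_i|^\beta - |x_i - y_i|^\beta)$ touches $u_i$ from above at $x_i$, and $\Psi_i(y) := v_i(y_i) - (1/\epsilon_i)(|x_i - y|^\beta - |x_i - y_i|^\beta)$ touches $v_i$ from below at $y_i$. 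The viscosity conditions, applied with extensions $W_{\Phi_i}, W_{\Psi_i}$ glued to $u_i, v_i$ outside small balls $B(x_i,r), B(y_i,r)$, give
\begin{align*}
A_i &:= 2\int_{\R^n} \frac{J_p(u_i(x_i) - W_{\Phi_i}(z))}{|x_i - z|^{n+ps}}\, \d z \leq D, \\
B_i &:= 2\int_{\R^n} \frac{J_p(v_i(y_i) - W_{\Psi_i}(z))}{|y_i - z|^{n+ps}}\, \d z \geq f_i(y_i),
\end{align*}
where $J_p(t) := |t|^{p-2}t$.

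I would then split each integral into a near-field piece on the ball of radius $r$ and a far-field piece. Lemma \ref{lem:sequrep-laplacian} bounds the near-field contributions by a constant that vanishes as $r \to 0$, thanks to $\beta > ps/(p-1)$. For the far-field, the maximality of $(x_i, y_i)$ combined with the exterior bound $u_i \leq B v_i < C v_i$ yields a pointwise comparison which, via monotonicity of $J_p$ and the equivalence $|x_i - z| \asymp |y_i - z|$ in the regime $|x_i - y_i| \to 0$, produces $A_i \geq C^{p-1} B_i - o(1)$ as $r \to 0$ and $\epsilon_i \to 0$. Up to a subsequence, $x_i, y_i \to z_\infty \in \overline{\Omega}$. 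If $z_\infty \in \Omega$, uniform convergence $v_i \to v > 0$ forces $u_i(x_i)/v_i(y_i)$ to remain bounded, contradicting positivity of the sup once $C$ is chosen larger than this bound. Otherwise $z_\infty \in \partial \Omega$; the exterior bound $u_i \leq B v_i$ and positivity of the sup force $\limsup_i v_i(y_i) \leq 0$, so the boundary hypothesis delivers $\limsup_i f_i(y_i) \geq -2\int_K v^{p-1}(y)/|z_\infty - y|^{n+ps}\,\d y$. Feeding this into $A_i \leq D$ and $A_i \geq C^{p-1} f_i(y_i) - o(1)$ yields a contradiction for $C$ large enough. \emph{The main obstacle} is precisely this final matching: tracking the far-field estimates carefully enough to verify that the lower bound on $\limsup f_i$ at a boundary limit absorbs the $C^{p-1}$ factor in $B_i$, and calibrating $\epsilon_i \to 0$ so that the remainders genuinely vanish; this is where the nonlinearity of $(-\Delta_p)^s$ makes the argument strictly harder than its linear counterpart in \cite{AA}.
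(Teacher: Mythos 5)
Your skeleton coincides with the paper's: negate the conclusion, double variables with the penalization $|x-y|^{\beta}/\epsilon$ via Lemma \ref{lem:epsilon}, build the two glued test functions at $x_i$ and $y_i$, invoke the two viscosity inequalities, and kill the near-field with Lemma \ref{lem:sequrep-laplacian}. The gap is in the endgame, and it is a gap of logic rather than of bookkeeping. The chain you propose --- $A_i\le D$, $A_i\ge C^{p-1}B_i-o(1)$, $B_i\ge f_i(y_i)$, combined with the hypothesis $\limsup_i f_i(y_i)\ge -2\int_K v^{p-1}(y)\,|x-y|^{-n-ps}\,\d y$ --- is not contradictory: it only yields $f_i(y_i)\le D/C^{p-1}+o(1)$, which is perfectly compatible with a (negative) \emph{lower} bound on $\limsup_i f_i(y_i)$; take $f_i\equiv 0$ and all four statements hold at once. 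Since the hypothesis on $f_i$ is a lower bound, the contradiction must come from a strictly negative \emph{upper} bound on $B_i$, and that bound is produced by the piece of $B_i$ over the fixed compact $K$, which your sketch never isolates: there $v_i\to v>0$ uniformly while $v_i(y_i)\to\ell\le 0$, so
$\int_K |v_i(y_i)-v_i(y)|^{p-2}(v_i(y_i)-v_i(y))\,|y_i-y|^{-n-ps}\,\d y\to -\int_K v^{p-1}(y)\,|\tilde y-y|^{-n-ps}\,\d y<0$.
One then needs (a) to strictify the hypothesis by slightly enlarging $K$ (possible because $v>0$ in $\Omega$), so that $\limsup_i f_i(y_i)$ strictly exceeds $-2\int_K v^{p-1}(y)|\tilde y-y|^{-n-ps}\d y$, and (b) to show that the remaining piece of $B_i$ over $\Omega_{\delta/2}\setminus(K\cup B(y_i,r_i))$ is $\le o(1)$. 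Step (b) is the \emph{only} place where $(-\Delta_p)^s u_i\le D$ enters, via the translation $y\mapsto y+x_i-y_i$, which by maximality transfers increments of $v_i$ to increments of $u_i/C$; this transfer is valid only on that middle region (inside $\Omega_{\delta/2}$, away from $K$), so a single global inequality of the form $A_i\ge C^{p-1}B_i-o(1)$ cannot substitute for the region-by-region decomposition.

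A secondary issue is your choice of a fixed large $C$. The translated set $K+x_i-y_i$ contributes to $A_i$ a term of size $O(\|u_i\|_\infty^{p-1}/C^{p-1})$, and the subsolution inequality contributes $D/C^{p-1}$; with $C$ fixed these do not vanish and would have to be absorbed by the fixed gap from step (a), which requires extra quantification. The paper instead extracts a diagonal subsequence with constants $C_i\to\infty$, so that these terms tend to $0$ exactly; this is both cleaner and what the uniform boundedness of $u_i$ is used for. Your treatment of the interior case $\tilde x=\tilde y\in\Omega$ is fine and matches the paper's.
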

\begin{proof}
    We prove the \note{lemma} in several steps. By taking $K \Subset \Omega$ a bit larger and using $v>0$, we assume that 
    \begin{align}
    \label{eq:upperlimit}
       \underset{i \to \infty}{\limsup}\, f_i(x_i)     > -2 \int_{K} \frac{v^{p-1}(y)}{|x-y|^{n+ps}} \, \d y,, \quad &\text{if } \lim_{i \to \infty} x_i =x \in \partial \Omega, \, \underset{i \to \infty}{\limsup}\, v_i(x_i) \leq 0.
    \end{align}
Assume that by contradiction, there is a sequence of $C_i>B$ increasing to infinity, such that
    \begin{align*}
       m_i := \sup_{\R^n} \frac{u_i}{C_i} -v_i>0.
    \end{align*}
    Since $C_i>B$ and $0 \leq u_i \leq B v_i$ in $\R^n \setminus \Omega,$ we have \note{$$m_i = \sup_{\Omega} \frac{u_i}{C_i} -v_i>0.$$}\note{Let us choose $\beta> \max\big(\frac{ps}{p-1},2\big).$} \note{Then, by} Lemma \ref{lem:epsilon}, there exist $0<\epsilon_i< \frac{1}{i(1+\|v_i\|_{\L^{\infty}(\Omega)})}$ and $ (x_i,y_i) \in \Omega \times \Omega$ such that 
    \begin{align}
    \label{eq:keypoints}
      0< m_i \leq \sup_{(x,y) \in \Omega_{\delta} \times \Omega_{\delta}} \frac{u_i(x)}{C_i}- v_i(y) - \frac{|x-y|^{\beta}}{\epsilon_i} =  \frac{u_i(x_{i})}{C_i}- v_i(y_{i}) - \frac{|x_{i}-y_{i}|^{\beta}}{\epsilon_i}.
    \end{align}  
 Without loss of generality, up to a subsequence, we assume $\underset{i \to \infty}{\lim}\, x_i = \Tilde{x}, \underset{i \to \infty}{\lim}\, y_i = \Tilde{y}.$ Then, by \eqref{eq:keypoints}, we get 
 \begin{align*}
    |\Tilde{x}- \Tilde{y}|^{\beta} = \underset{i \to \infty}{\limsup}\,  |x_i- y_i|^{\beta} \leq \underset{i \to \infty}{\limsup}\, \frac{\frac{u_i(x_{i})}{C_i}+ |v_i(y_{i})|}{i(1+\|v_i\|_{\L^{\infty}(\Omega)})} =0.   
 \end{align*}
 Hence, $\Tilde{x} = \Tilde{y} \in \overline{\Omega}$. If $\Tilde{x} = \Tilde{y} \in \Omega$, then we arrive at the following contradiction
 \begin{align*}
  0 \leq \underset{i \to \infty}{\liminf}\, m_i \leq  \underset{i \to \infty}{\liminf}\,  \frac{u_i(x_{i})}{C_i}- v_i(y_{i}) =- v(\Tilde{y})<0.
 \end{align*}
In conclusion, $\Tilde{x}= \Tilde{y} \in \partial \Omega$.
 Also, by \eqref{eq:keypoints} and uniform boundedness of the sequence $u_i$, we derive
 \begin{align*}
     0 \leq \underset{i \to \infty}{\liminf}\,  \frac{u_i(x_{i})}{C_i}- v_i(y_{i}) = \underset{i \to \infty}{\liminf} \,- v_i(y_{i}) = - \underset{i \to \infty}{\limsup}\, v_i(y_i).
 \end{align*}
Thus,
\begin{align}
    \label{eq:limsup}
    \underset{i \to \infty}{\limsup}\, v_i(y_i) \leq 0.
\end{align}
Taking $i$ large enough, we assume that $|x_i-y_i| \leq \frac{\delta}{3}$ and $x_i, y_i$ belong to $\Omega \setminus \overline{K}.$ Let $r_i>0$ be small enough, such that $B(x_i,r_i) \cup B(y_i,r_i) \subset \Omega \setminus \overline{K}$, $r_i < \frac{|x_i-y_i|}{2}$ if $x_i \neq y_i$, and
\begin{align}
    \label{eq:rcondition}
C_{r_i} \leq \frac{\epsilon_i^{p-1}}{i},   
\end{align}
where $C_{r_i}$ is the constant in Lemma \ref{lem:sequrep-laplacian}, \note{replacing} $x_0,y_0$ with $x_i,y_i$, respectively. Then, by \eqref{eq:keypoints}, the following inequalities are deduced:\\
$(i).$ If we set $x=x_i,$ then
\begin{align*}
    v_i(y) - v_i(y_i) \geq \frac{|y_i-x_i|^{\beta}}{\epsilon_i} - \frac{|y-x_i|^{\beta}}{\epsilon_i}, \quad \text{ in } \Omega_{\delta}.
\end{align*}
$(ii).$ If we set $x=y+ x_i-y_i$, then
\begin{align*}
       v_i(y) - v_i(y_i) \geq \frac{u_i(y+x_i-y_i)}{C_i}- \frac{u_i(x_i)}{C_i}, \quad \text{ in } \Omega_{\frac{\delta}{2}}.
\end{align*}
Notice that we used the fact that $|x_i -y_i| \leq \frac{\delta}{3}$ in the above inequality. \\
$(iii).$ If we set $y=y_i$, then
\begin{align*}
    \frac{u_i(x)}{C_i} - \frac{u_i(x_i)}{C_i} \leq \frac{|x-y_i|^{\beta}}{\epsilon_i} - \frac{|x_i-y_i|^{\beta}}{\epsilon_i}, \quad \text{in } \Omega_{\delta}.
\end{align*}
Now, define the functions
\begin{align*}
    w_i(y) &:= \begin{cases}
       v_i(y_i) + \frac{|y_i-x_i|^{\beta}}{\epsilon_i} - \frac{|y-x_i|^{\beta}}{\epsilon_i}, \quad &\text{in } B(y_i,r_i),\\
        v_i(y), \quad &\text{in } \R^n \setminus B(y_i,r_i).\end{cases}\\
        \Tilde{w}_i(x) &:= \begin{cases}
       \frac{u_i(x_i)}{C_i}+ \frac{|x-y_i|^{\beta}}{\epsilon_i} - \frac{|x_i-y_i|^{\beta}}{\epsilon_i}, \quad &\text{in } B(x_i,r_i),\\
        \frac{u_i(x)}{C_i}, \quad &\text{in } \R^n \setminus B(x_i,r_i).\end{cases}
\end{align*}
Hence, by $(i),(iii)$, $w_i$ touches $v_i$ from below at $y_i$, and $\Tilde{w}_i$ touches $\frac{u_i}{C_i}$ from above at $x_i.$ In conclusion,
\begin{equation}
\label{eq:viscosity}
\begin{aligned}
2\int_{\R^n}  \frac{|w_i(y)-w_i(y_i)|^{p-2} (w_i(y)-w_i(y_i))}{|y-y_i|^{n+ps}} \, \d y \leq -f_i(y_i),\\
2\int_{\R^n}  \frac{|\Tilde{w}_i(y)-\Tilde{w}_i(y_i)|^{p-2} (\Tilde{w}_i(y)-\Tilde{w}_i(y_i))}{|y-y_i|^{n+ps}} \, \d y \geq -\frac{D}{C_i^{p-1}}.
\end{aligned}
\end{equation}
Now, the rest of the proof aims at deriving a contradiction from the inequalities above and \eqref{eq:upperlimit}. By the first inequality in \eqref{eq:viscosity}, it is obtained that 
\begin{align*}
    &\underset{i \to \infty}{\liminf}\, -f_i(y_i)\\ &\geq  2\,\underset{i \to \infty}{\liminf}\, \int_{\R^n}  \frac{|w_i(y)-w_i(y_i)|^{p-2} (w_i(y)-w_i(y_i))}{|y-y_i|^{n+ps}} \, \d y\\
    &\geq 2\,\underset{i \to \infty}{\liminf}\, \int_{\R^n \setminus \Omega_{\frac{\delta}{2}}}  \frac{|v_i(y)-v_i(y_i)|^{p-2} (v_i(y)-v_i(y_i))}{|y-y_i|^{n+ps}} \, \d y \\& -2\, \underset{i \to \infty}{\limsup} \frac{1}{\epsilon_i^{p-1}} \int_{B(y_i,r_i)}  \frac{||y-x_i|^{\beta}-|y_i-x_i|^{\beta}|^{p-2} (|y-x_i|^{\beta}-|y_i-x_i|^{\beta})}{|y-y_i|^{n+ps}} \, \d y\\ &+2\,\underset{i \to \infty}{\liminf} \int_{K}  \frac{|v_i(y)-v_i(y_i)|^{p-2} (v_i(y)-v_i(y_i))}{|y-y_i|^{n+ps}} \, \d y\\&+2\,\underset{i \to \infty}{\liminf} \int_{\Omega_{\frac{\delta}{2}} \setminus (B(y_i,r_i) \cup K)}  \frac{|v_i(y)-v_i(y_i)|^{p-2} (v_i(y)-v_i(y_i))}{|y-y_i|^{n+ps}} \, \d y\\& \geq J_1+ J_2 + J_3+J_4.
\end{align*}
For the left-hand side, by \eqref{eq:upperlimit}, we obtain 
\begin{align*}
    \underset{i \to \infty}{\liminf}\, -f_i(y_i) = -\underset{i \to \infty}{\limsup}\, f_i(y_i) <  2 \int_{K} \frac{v^{p-1}(y)}{|y-\Tilde{y}|^{n+ps}} \, \d y.
\end{align*}
For the term $J_1$, we use $0 \leq u_i \leq B v_i$ in $\R^n \setminus \Omega$, \eqref{eq:limsup}, and Lebesgue dominated convergence to obtain
\begin{align*}
    J_1 \geq  2\,\underset{i \to \infty}{\liminf}\, \int_{\R^n \setminus \Omega_{\frac{\delta}{2}}}  \frac{-|v_i(y_i)|^{p-2} v_i(y_i)}{|y-y_i|^{n+ps}} \, \d y  \geq 0.
\end{align*}
By Lemma \ref{lem:sequrep-laplacian} and \eqref{eq:rcondition}, we get
\begin{align*}
    J_2 =0.
\end{align*}
For the term $J_3$, we use that $v_i$ converges uniformly to $v$ on $K$\note{, together with \eqref{eq:limsup},} to derive
\begin{align*}
    J_3 \geq 2\int_{K} \frac{v^{p-1}(y)}{|y-\Tilde{y}|^{n+ps}} \, \d y.
\end{align*}
The last term requires more work, and this is the place \note{where} we need to use the assumption $(-\Delta_p)^s u \leq D$ and the function $\Tilde{w}_i$. By $(ii)$ and integration by substitution, we obtain
\begin{align*}
    J_4 \geq 2\,\underset{i \to \infty}{\liminf}\, \int_{\Omega_{\frac{\delta}{2}}+ x_i -y_i \setminus (B(x_i,r_i) \cup K + x_i -y_i)} \frac{\biggl|\frac{u_i(y)}{C_i}-\frac{u_i(x_i)}{C_i}\biggr|^{p-2} \biggl(\frac{u_i(y)}{C_i}-\frac{u_i(x_i)}{C_i}\biggr)}{|y-x_i|^{n+ps}} \, \d y.
\end{align*}
Hence, by the second inequality in \eqref{eq:viscosity}, \eqref{eq:rcondition}, and Lemma \ref{lem:sequrep-laplacian}, \note{we imply} 
\begin{align*}
 0 &= \underset{i \to \infty}{\liminf}\, -\frac{D}{C_i^{p-1}} \\ &\leq 2\lim_{i \to \infty} \frac{1}{\epsilon_i^{p-1}} \int_{B(x_i,r_i)}  \frac{||y-y_i|^{\beta}-|x_i-y_i|^{\beta}|^{p-2} (|y-y_i|^{\beta}-|x_i-y_i|^{\beta})}{|y-x_i|^{n+ps}} \, \d y,\\ & + 2\lim_{i \to \infty}\int_{K+ x_i - y_i}  \frac{\biggl|\frac{u_i(y)}{C_i}-\frac{u_i(x_i)}{C_i}\biggr|^{p-2} \biggl(\frac{u_i(y)}{C_i}-\frac{u_i(x_i)}{C_i}\biggr)}{|y-x_i|^{n+ps}} \, \d y \\ &+ 2\lim_{i \to \infty}\int_{\R^n\setminus \Omega_{\frac{\delta}{2}}+ x_i - y_i} \frac{\biggl|\frac{u_i(y)}{C_i}-\frac{u_i(x_i)}{C_i}\biggr|^{p-2} \biggl(\frac{u_i(y)}{C_i}-\frac{u_i(x_i)}{C_i}\biggr)}{|y-x_i|^{n+ps}} \, \d y \\
 &+ 2\,\underset{i \to \infty}{\liminf}\,\int_{\Omega_{\frac{\delta}{2}} + x_i -y_i \setminus (B(x_i,r_i) \cup K+ x_i - y_i )} \frac{\biggl|\frac{u_i(y)}{C_i}-\frac{u_i(x_i)}{C_i}\biggr|^{p-2} \biggl(\frac{u_i(y)}{C_i}-\frac{u_i(x_i)}{C_i}\biggr)}{|y-x_i|^{n+ps}} \, \d y\\ &=  2\,\underset{i \to \infty}{\liminf}\,\int_{\Omega_{\frac{\delta}{2}} + x_i -y_i \setminus (B(x_i,r_i) \cup K+ x_i - y_i )} \frac{\biggl|\frac{u_i(y)}{C_i}-\frac{u_i(x_i)}{C_i}\biggr|^{p-2} \biggl(\frac{u_i(y)}{C_i}-\frac{u_i(x_i)}{C_i}\biggr)}{|y-x_i|^{n+ps}} \, \d y\\
 & \leq J_4.
\end{align*}
In conclusion,
$J_4 \geq 0$ and 
\begin{align*}
2\int_{K} \frac{v^{p-1}(y)}{|\Tilde{y}-y|^{n+ps}} \, \d y > J_1 + J_2 + J_3 + J_4 \geq 2\int_{K} \frac{v^{p-1}(y)}{|\Tilde{y}-y|^{n+ps}} \, \d y,
\end{align*}
which provides the contradiction.

\end{proof}

\begin{lem}
\label{lem:maxprinciple}
    Let $u \in \L^{p-1}_{ps}(\note{\R^n}) \cap C(\Omega)$ be a viscosity supersolution of $\note{(-\Delta_p)^s u = f}$ in $\Omega$ \note{for $f \in C(\Omega)$}. If there exists a point $x_0 \in \Omega$ such that $u(x_0) = \inf_{\R^n} u$, then
    \begin{align*}
         2\int_{\R^n} \frac{|u(x_0)-u(y)|^{p-2}(u(x_0)-u(y))}{|x_0-y|^{n+ps}} \, \d y \geq f(x_0).
    \end{align*}
\end{lem}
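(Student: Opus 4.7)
The plan is to produce an admissible viscosity test function that touches $u$ from below at $x_0$, and then pass to the limit. For $\epsilon>0$ and a parameter $\beta>\frac{ps}{p-1}$ to be specified, set
\[
\phi_{\epsilon,\beta}(x) := u(x_0) - \epsilon |x-x_0|^\beta.
\]
Since $x_0$ is a global minimum of $u$ on $\R^n$, one has $\phi_{\epsilon,\beta}(x_0)=u(x_0)$ and $\phi_{\epsilon,\beta}(x)<u(x_0)\leq u(x)$ strictly for $x\neq x_0$, so on any ball $\overline{B(x_0,r)}\subset\Omega$ the function $\phi_{\epsilon,\beta}$ touches $u$ from below at $x_0$ in the strict sense required by Definition~\ref{defn:viscosity}.

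Admissibility splits into two cases. If $p>\frac{2}{2-s}$, I choose $\beta=2$, so that $\phi_{\epsilon,\beta}\in C^2(\R^n)$ and condition $(a)$ is automatic. If $1<p\leq\frac{2}{2-s}$, then $\frac{ps}{p-1}\geq 2$, and for any $\beta>\frac{ps}{p-1}$ one has $N_{\phi_{\epsilon,\beta}}=\{x_0\}$ (an isolated critical point) together with the size bounds $|\nabla\phi_{\epsilon,\beta}(x)|\simeq \epsilon|x-x_0|^{\beta-1}$ and $|D^2\phi_{\epsilon,\beta}(x)|\lesssim \epsilon|x-x_0|^{\beta-2}$, so $\phi_{\epsilon,\beta}\in C^2_\beta(B(x_0,r))$ and condition $(b)$ holds.

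Now set $w:=\phi_{\epsilon,\beta}$ on $B(x_0,r)$ and $w:=u$ on $\R^n\setminus B(x_0,r)$. The viscosity supersolution inequality yields
\[
2\int_{B(x_0,r)}\frac{\epsilon^{p-1}|y-x_0|^{\beta(p-1)}}{|x_0-y|^{n+ps}}\,\d y + 2\int_{\R^n\setminus B(x_0,r)}\frac{|u(x_0)-u(y)|^{p-2}(u(x_0)-u(y))}{|x_0-y|^{n+ps}}\,\d y \geq f(x_0).
\]
The first integral reduces to a constant multiple of $\epsilon^{p-1}\int_{B(x_0,r)}|y-x_0|^{\beta(p-1)-n-ps}\,\d y$, which is finite because $\beta(p-1)>ps$ and vanishes as $\epsilon\to 0$. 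Sending $\epsilon\to 0$ with $r$ fixed therefore preserves the inequality with only the second integral on the left.

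It remains to send $r\to 0$. Since $u(y)\geq u(x_0)$ pointwise, the integrand in the surviving integral is non-positive on $\R^n$, so by monotone convergence applied to its absolute value the integrals over $\R^n\setminus B(x_0,r)$ decrease to the integral over $\R^n$ as $r\to 0$, giving the asserted inequality. The main obstacle is the admissibility check when $1<p\leq\frac{2}{2-s}$, where $\phi_{\epsilon,\beta}$ fails to be $C^2$ at $x_0$ and one must invoke the extended class $C^2_\beta$; everything else reduces to straightforward limit passage controlled by the definite sign of the integrand.
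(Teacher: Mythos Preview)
Your proof is correct and follows essentially the same approach as the paper's. The only cosmetic difference is that the paper uses a single parameter: it sets $\phi(y)=u(x_0)-|x_0-y|^\beta$ on $B(x_0,\epsilon)$ (no coefficient in front) and lets the ball radius $\epsilon\to 0$ directly, so the near-field contribution $2|B(0,1)|\,\epsilon^{\beta(p-1)-ps}$ vanishes in a single limit, whereas you introduce an extra coefficient $\epsilon$ and a separate radius $r$ and perform two limit passages; both routes lead to the same place.
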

\begin{proof}
   \note{The proof} follows the \note{same} argument \note{as} \cite[Lem. 5.3]{AA}. Let $x_0 \in \Omega$ satisfy $u(x_0) = \inf_{\R^n} u.$ Since $u(x_0)-u(y) \leq 0$ for $y \in \R^n$, the integral above is well-defined without $\mathrm{P.V.}$ and might be $- \infty$. \note{Let us fix $\beta >\max\big( \frac{ps}{p-1},2\big)$.} Define $u_{\epsilon}(y)= u(x_0)-|x_0-y|^{\beta}$ for \note{$y \in B(x_0,\epsilon)$, and} $u_{\epsilon}= u$ in $\R^n \setminus B(x_0,\epsilon)$ for $\epsilon>0$ small enough, such that $\overline{B(x_0,\epsilon)} \subset \Omega$. \note{Then,  $u_{\epsilon}$} touches $u$ from below at $x_0$ and satisfies either the condition $(a)$ or $(b)$ in Definition \ref{defn:viscosity}. Hence,
    \begin{align*}
 &\note{\frac{2 n |B(0,1)|}{\beta(p-1)-ps}}\epsilon^{\beta(p-1)-ps}+ 2 \int_{\R^n
 \setminus B(x_0,\epsilon)}  \frac{|u(x_0)-u(y)|^{p-2}(u(x_0)-u(y))}{|x_0-y|^{n+ps}} \, \d y\\&= 2\int_{ B(x_0,\epsilon)}  \frac{|x_0-y|^{\beta (p-1)}}{|x_0-y|^{n+ps}}\, \d y+ 2\int_{\R^n \setminus B(x_0,\epsilon)}  \frac{|u_{\epsilon}(x_0)-u_{\epsilon}(y)|^{p-2}(u_{\epsilon}(x_0)-u_{\epsilon}(y))}{|x_0-y|^{n+ps}} \, \d y \\ &= 2\int_{\R^n} \frac{|u_{\epsilon}(x_0)-u_{\epsilon}(y)|^{p-2}(u_{\epsilon}(x_0)-u_{\epsilon}(y))}{|x_0-y|^{n+ps}}  \, \d y  \geq f(x_0).
\end{align*}
\note{In conclusion,
\begin{align*}
   f(x_0) + 2 \int_{\R^n
 \setminus B(x_0,\epsilon)}  \frac{|u(y)-u(x_0)|^{p-2}(u(y)-u(x_0))}{|x_0-y|^{n+ps}} \, \d y \leq \note{\frac{2 n |B(0,1)|}{\beta(p-1)-ps}}\epsilon^{\beta(p-1)-ps}.
\end{align*}}Now, letting $\epsilon \to 0$ and using the monotone convergence theorem, it is obtained that
\begin{align*}
 f(x_0) \leq  2\int_{\R^n} \frac{|u(x_0)-u(y)|^{p-2}(u(x_0)-u(y))}{|x_0-y|^{n+ps}} \, \d y.
\end{align*}
\end{proof} In particular, in the above lemma, if $u$ is non-negative and \note{$$f(x_0)> - 2\int_{\R^n} \frac{u^{p-1}(y)}{|x_0-y|^{n+ps}} \, \d y,$$}for any $x_0 \in \Omega$ satisfying $u(x_0)=0$, then $u>0$ in $\Omega$. This is the strong maximum principle for fractional $p$-Laplacian equations. We bring an example to justify the sharpness of the condition \note{$$f(x_0)> - 2\int_{\R^n} \frac{u^{p-1}(y)}{|x_0-y|^{n+ps}} \, \d y.$$} Let $\beta > \frac{ps}{p-1}$ \note{and} 
\begin{align*}
    u(x) := \begin{cases}
|x|^{\beta},\quad & x \in B(0,1)\\
1, \quad &x \in \R^n \setminus B(0,1).
    \end{cases}
\end{align*}
Then, $u(0)=0$ and
\begin{align*}
    (-\Delta_p)^s u(0)= -2\int_{\R^n} \frac{u^{p-1}(y)}{|y|^{n+ps}} \, \d y.
\end{align*}

Now, we prove Hopf's lemma.

\begin{proof}[Proof of Lemma \ref{lem:Hopf's lemma}]
By Lemma \ref{lem:maxprinciple} and the condition  $$f(x_0) > -2 \int_{\R^n} \frac{u^{p-1}(y)}{|x_0-y|^{n+ps}} \, \d y, \quad \text{if } x_0 \in \Omega, \, u(x_0)=0,$$ we obtain $u>0$ in $\Omega$. Finally, if we set $u_i =u_{\tor}, v_i = u$ in Lemma \ref{thm:boundbelow} and use 
\begin{align*}
      \,\underset{\Omega \ni x \to x_0}{\limsup}\, f(x) \geq -2\int_{K} \frac{u^{p-1}(y)}{|x_0 - y|^{n+ps}} \, \d y, \quad & \text{if } x_0 \in \partial \Omega,  u(x_0) =0,
\end{align*}
we conclude that $u \geq C u_{\tor}$ for a positive constant $C$.
\end{proof}

\begin{lem}
\label{lem:newmaximum}
    Let $\delta>0, u \in \L^{p-1}_{ps}(\R^n) \cap C(\overline \Omega_{\delta}) $ be a viscosity supersolution of 
    \begin{align*}
       \note{(-\Delta_p)^s u = -2(\diam \Omega)^{-(n+ps)} \int_{K} u^{p-1}(y) \, \d y, \quad \text{in } \Omega,} 
    \end{align*}
    for a \note{subset} $K \Subset \Omega,$ \note{which satisfies $u \geq 0$ in $\R^n \setminus \Omega.$} Then, either $u =0$ a.e. in $\R^n \setminus K$, or $u>0$ in $\Omega$.
\end{lem}
\begin{proof}
 If $u>0$ in $\Omega$, then the proof is complete. Now, assume that $u(x_0)=\inf_{\R^n}u \leq 0$ for a point $x_0 \in \Omega.$ Then, by Lemma \ref{lem:maxprinciple}, 
    \begin{align*}
        &  2\int_{\R^n} \frac{|u(x_0)-u(y)|^{p-2} (u(x_0)-u(y))}{|x_0-y|^{n+ps}} \, \d y \geq - 2 (\mathrm{diam}\, \Omega)^{-(n+ps)}\int_{K} u^{p-1}(y)\, \d y \\
 &\geq 2 (\mathrm{diam}\, \Omega)^{-(n+ps)}\int_{K} |u(x_0)- u(y)|^{p-2}(u(x_0)-u(y))\, \d y \\& \geq 2\int_{K} \frac{|u(x_0)- u(y)|^{p-2}(u(x_0)-u(y))}{|x_0-y|^{n+ps}} \, \d y. 
    \end{align*}
Hence, $u(y) = u(x_0) \leq 0$ a.e. in $\R^n \setminus K.$ In conclusion, by the assumption $u \geq 0$ in $\R^n \setminus \Omega$, we arrive at $u =0$ a.e. in $\R^n \setminus K.$
    
\end{proof}

\begin{proof}[Proof of Theorem \ref{thm:boundaryharnack}]
    Let $\delta>0, u_i \in C(\overline \Omega_{\delta}) \cap V_{g_{u_i}}^{s,p}(\Omega|\R^n), v_i \in C(\overline \Omega_{\delta}) \cap V_{g_{v_i}}^{s,p}(\Omega|\R^n)$ be positive on $\Omega$, 
\begin{align*}
     0 \leq \frac{1}{B} g_{v_i} \leq g_{u_i} \leq B g_{v_i} \leq M, \quad &\text{in } \R^n \setminus \Omega,
\end{align*}
and  \begin{equation}
    \label{eq:viscosityequation}
    \begin{aligned}
   - 2 (\mathrm{diam}\, \Omega)^{-(n+ps)}\int_{K} u_i^{p-1}(y)\, \d y \leq (-\Delta_p)^s u_i \leq  1, \quad & \text{in } \Omega,\\
    -2 (\mathrm{diam}\, \Omega)^{-(n+ps)}\int_{K} v_i^{p-1}(y) \, \d y  \leq   (-\Delta_p)^s v_i \leq 1, \quad &\text{in } \Omega,
    \end{aligned}
    \end{equation}
    in the locally weak sense, where $K \Subset \Omega$. We also assume that $$u_i(x_0) \geq D, \, v_i(x_0) \geq D \, (\|u_i\|_{\L^q(\Omega)} \geq D, \|v_i\|_{\L^q(\Omega)} \geq D),$$ for $D > 0, 1<q< \infty.$ \note{By} Proposition \ref{prop:upperbound}, $\|u_i\|_{\L^{\infty}(\Omega)} + \|v_i\|_{\L^{\infty}(\Omega)} \leq C$ for a constant $C$ depending on $n,s,p,\Omega,B, M.$ Also, by local Hölder regularity, see \cite[Thm. 5.4]{IMS}, the Arz\'ela-Ascoli theorem, and passing to a subsequence, $u_i,v_i$ converge uniformly to $u,v$, respectively, on compact subsets of $\Omega.$ Define $u=v=0$ in $\R^n \setminus \Omega$ and
    \begin{equation*}
    \begin{aligned}
    \Tilde{u}_i &:= \begin{cases}
        u_i, \quad \text{in } \Omega,\\
        0, \quad \text{in } \R^n \setminus \Omega,
    \end{cases}\\
     \Tilde{v}_i &:= \begin{cases}
        v_i, \quad \text{in } \Omega,\\
        0, \quad \text{in } \R^n \setminus \Omega.
        \end{cases}
        \end{aligned}
    \end{equation*} 
Then, by \eqref{eq:viscosityequation} and Definition \ref{defn:viscosity}, we have 
\begin{align*}
       - 2 (\mathrm{diam}\, \Omega)^{-(n+ps)}\int_{K} \Tilde{u}_i^{p-1}(y)\, \d y \leq (-\Delta_p)^s \Tilde{u}_i, \quad & \text{in } \Omega,\\
    -2 (\mathrm{diam}\, \Omega)^{-(n+ps)}\int_{K} \Tilde{v}_i^{p-1}(y) \, \d y  \leq   (-\Delta_p)^s \Tilde{v}_i, \quad &\text{in } \Omega,
\end{align*}
in the viscosity sense. Hence, by \note{the} Lebesgue dominated convergence, Proposition \ref{prop:equivalenceweakvisco}, and Proposition \ref{prop:gammaconve}, \note{we have}
    \begin{align*}
        u(x_0) \geq D, \, v(x_0) \geq D  \, (\|u\|_{\L^q(\Omega)} \geq D, \|v\|_{\L^q(\Omega)} \geq D),
    \end{align*}
    and  \begin{align*}
   - 2 (\mathrm{diam}\, \Omega)^{-(n+ps)}\int_{K} u^{p-1}(y)\, \d y \leq (-\Delta_p)^s u, \quad & \text{in } \Omega,\\
    -2 (\mathrm{diam}\, \Omega)^{-(n+ps)}\int_{K} v^{p-1}(y) \, \d y  \leq   (-\Delta_p)^s v, \quad &\text{in } \Omega,
    \end{align*}
    in the viscosity sense.
Hence, by Proposition \ref{prop:equivalenceweakvisco} and Lemma \ref{lem:newmaximum}, $u_i,v_i$ satisfy \eqref{eq:viscosityequation} in the viscosity sense and $u,v$ are strictly positive in $\Omega$. Finally, by Lemma \ref{thm:boundbelow}, $\frac{u_i}{v_i}$ and $\frac{v_i}{u_i}$ are uniformly bounded from below in $\Omega$. This completes the proof.

\end{proof}
In the case that $K$ is empty, we obtain the following result:

\begin{cor}
     Let $\delta>0$, $u \in C(\overline \Omega_{\delta}) \cap V_{g_u}^{s,p}(\Omega|\R^n),v \in C(\overline \Omega_{\delta}) \cap V_{g_v}^{s,p}(\Omega|\R^n)$ satisfy  
    \begin{align*}
    u>0 , \,v>0 \quad & \text{in }  \Omega,\\
   0 \leq \frac{1}{B} g_v \leq g_u \leq B g_v \leq M, \quad &\text{in } \R^n \setminus \Omega,
    \end{align*}
  for $B>0$, $M \geq 0$, and
    \begin{align*}
  0 \leq (-\Delta_p)^s u \leq  1, \quad & \text{in } \Omega,\\
    0  \leq   (-\Delta_p)^s v \leq 1, \quad &\text{ in } \Omega,
    \end{align*}
    in the locally weak sense. If either $u(x_0)\geq D , \; v(x_0)\geq D$ for a fixed point $x_0 \in \Omega$ or $\|u\|_{\L^q(\Omega)} \geq D , \;  \|v\|_{\L^q(\Omega)}\geq D$ for $D>0,1 \leq q < \infty$, then
    \begin{align*}
       C_1 \leq \frac{u}{v} \leq C_2, \quad \text{in } \Omega,
    \end{align*} where $C_1, C_2$ are positive constants depending on $\Omega, \delta, n, s,p, D,B,M, x_0$ or $q$.
\end{cor}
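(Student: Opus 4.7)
The plan is to deduce this corollary as a direct specialization of Theorem \ref{thm:boundaryharnack}, where we have the freedom to pick any $K \Subset \Omega$ since no particular $K$ is imposed by the hypothesis. The central observation is that the assumption $0 \leq (-\Delta_p)^s u$ (respectively for $v$) in the locally weak sense automatically dominates the negative lower bound appearing in Theorem \ref{thm:boundaryharnack}: indeed, for every $K \Subset \Omega$ one has
\begin{align*}
-2(\diam \Omega)^{-(n+ps)} \int_K u^{p-1}(y)\,\d y \leq 0 \leq (-\Delta_p)^s u, \quad \text{in } \Omega,
\end{align*}
and similarly for $v$. Thus the left-hand inequalities required by Theorem \ref{thm:boundaryharnack} hold for any choice of $K$, and the right-hand inequalities $(-\Delta_p)^s u \leq 1$, $(-\Delta_p)^s v \leq 1$ are assumed directly. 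It therefore remains only to choose $K$ so that the positivity criterion at $x_0$ (or in $L^q$) can be transferred to the complement $\Omega \setminus K$.

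In the pointwise case, I would simply select $K \Subset \Omega$ to be any compact set (for instance, a small closed ball contained in $\Omega$) that avoids the point $x_0$. Then $x_0 \in \Omega \setminus K$ and the conditions $u(x_0) \geq D$, $v(x_0) \geq D$ are precisely the hypotheses demanded by Theorem \ref{thm:boundaryharnack}, so the conclusion $C_1 \leq u/v \leq C_2$ follows immediately.

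In the $L^q$ case, one needs a little more care to shift the mass off a candidate set $K$. By Proposition \ref{prop:upperbound}, both $u$ and $v$ are bounded on $\Omega$ by a constant $M'$ depending only on $n, s, p, \Omega, B, M$. Choose a closed ball $K \Subset \Omega$ so small that
\begin{align*}
\|u\|_{\L^q(K)} + \|v\|_{\L^q(K)} \leq 2 M' |K|^{1/q} \leq \tfrac{D}{2}.
\end{align*}
Then $\|u\|_{\L^q(\Omega \setminus K)} \geq D/2$ and $\|v\|_{\L^q(\Omega \setminus K)} \geq D/2$, and Theorem \ref{thm:boundaryharnack} applies with $D$ replaced by $D/2$.

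There is essentially no genuine obstacle here: both cases reduce to the main boundary Harnack theorem after a free choice of $K$. The only slightly nontrivial point is invoking Proposition \ref{prop:upperbound} in the $L^q$ case to guarantee that $\|u\|_{L^q(K)}$ and $\|v\|_{L^q(K)}$ can be made arbitrarily small by shrinking $|K|$; this uses the given $L^\infty$ bound on the boundary data through $M$, which is already reflected in the dependence of the constants $C_1, C_2$ on $M$.
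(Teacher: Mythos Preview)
Your argument is correct, but the paper takes a shorter route: it simply specializes Theorem~\ref{thm:boundaryharnack} to the case $K=\emptyset$. With $K$ empty one has $\int_K u^{p-1}\,\d y=0$, so the lower bounds in Theorem~\ref{thm:boundaryharnack} reduce exactly to $0\leq (-\Delta_p)^s u$ and $0\leq (-\Delta_p)^s v$; moreover $\Omega\setminus K=\Omega$, so the pointwise condition $x_0\in\Omega$ and the norm condition $\|u\|_{\L^q(\Omega)}\geq D$ match directly without any adjustment. Your approach instead keeps $K$ nonempty and then works to ensure $x_0\notin K$ in the pointwise case and to control $\|u\|_{\L^q(K)}$ via Proposition~\ref{prop:upperbound} in the $\L^q$ case. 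This is valid and the dependencies of $C_1,C_2$ still come out right, but the detour through Proposition~\ref{prop:upperbound} and the halving of $D$ are unnecessary once you notice that the empty set is an admissible choice of $K\Subset\Omega$.
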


    

\section{Eigenvalue problem}
\label{sec:isolation}
In this section, $\Omega \subset \R^n$ is a bounded open set with Wiener regular boundary for the $s$-fractional $p$-Laplacian. Note that, as mentioned in Section \ref{sec:fracsobol}, $\Lambda_{p,q}>0$ in \eqref{eq:fist-eignevalue} by fractional Poincar\'e-Sobolev theorem, see Theorem \ref{thm:sobolevpoincare-ineq}. Now, it is aimed to prove Theorem \ref{thm:isolation}. We divide the proof into several steps. First, we prove the following global boundedness of weak solutions.
\begin{prop}
\label{prop:globalbound}
    Let $u \in V_0^{s,p}(\Omega|\R^n)$ be a \note{nonzero} weak solution of
\note{\begin{align*}(-\Delta_p)^s u = \lambda \|u\|^{p-q}_{\L^q(\Omega)} |u|^{q-2} u, \quad & \text{in }  \Omega,\end{align*}}where $1<q<p^{\ast}_s, \lambda>0$. Then,
\begin{align*}
    \|u\|_{\L^{\infty}(\Omega)} \leq C \note{ \lambda^{\theta}},
\end{align*}
where \note{$\theta$ is a positive constant depending on $n,s,p,q$ and} $C$ is a constant depending on $n,s,p,q,\Omega.$    
\end{prop}
\begin{proof}
   \note{By Theorem \ref{thm:sobolevpoincare-ineq} and H\"older's inequality, we have $u \in \L^q(\Omega)$.
   Since the problem is scale-invariant, without loss of generality, we can assume that $\|u\|_{\L^q(\Omega)}=1.$
   Now, we consider three cases. First, if $ps >n$, we can simply apply Theorem \ref{thm:sobolevpoincare-ineq}, together with the equation for $u,$ to derive that 
\begin{align*}
    \|u\|^p_{\L^{\infty}(\Omega)} \leq C [u]^p_{V^{s,p}(\R^n)}
    = C \lambda,
\end{align*}
which concludes the proposition for $\theta = \frac{1}{p}$. 
Now, for $ps = n$, we have 
\begin{align*}
    \|u\|_{\L^{\infty}(\Omega)} \leq C \| \, \lambda |u|^{q-2} u \, \|^{\frac{1}{p-1}}_{\L^{\frac{q}{q-1}}(\Omega)} = C \lambda^{\frac{1}{p-1}},
\end{align*}
where we used \cite[Lem. 2.3]{MPSY}. This completes the proposition for $\theta = \frac{1}{p-1}$. Finally, if $ps <n$, we use \cite[Lem. 2.3]{MPSY} multiple times until we get $\L^{\infty}$-estimate for $u$. We note that by Theorem \ref{thm:sobolevpoincare-ineq} and the equation for $u,$ we derive that
\begin{align*}
    \|u\|_{\L^{p_s^*}(\Omega)} \leq C [u]_{V^{s,p}(\R^n)} =  C \lambda^{\frac{1}{p}}.
\end{align*}
For the case that $u \in \L^r(\Omega)$ where $r > \frac{(q-1) n}{ps}$, we can use \cite[Lem. 2.3]{MPSY} once to imply that 
\begin{align*}
    \|u\|_{\L^{\infty}(\Omega)} \leq C \| \, \lambda |u|^{q-2} u \, \|^{\frac{1}{p-1}}_{\L^{\frac{r}{q-1}}(\Omega)} = C \lambda^{\frac{1}{p-1}} \|u\|^{\frac{q-1}{p-1}}_{\L^{r}(\Omega)},
\end{align*}
which concludes the proof. Now, if $u \in \L^{r}(\Omega)$ for $r = (q-1) \frac{n}{ps}$, we get 
\begin{align*}
    \|u\|_{\L^{t}(\Omega)} \leq |\Omega|^{\frac{1}{t}- \frac{1}{r}} \|u\|_{\L^r(\Omega)}
\end{align*}
by H\"older's inequality, where $q-1<t <(q-1) \frac{n}{ps}.$ Hence, by \cite[Lem. 2.3]{MPSY}, we obtain 
\begin{align*}
    \|u\|_{\L^{r'}(\Omega)} \leq C \| \, \lambda |u|^{q-2} u \, \|^{\frac{1}{p-1}}_{\L^{\frac{t}{q-1}}(\Omega)} \leq C \lambda^{\frac{1}{p-1}} |\Omega|^{ \frac{q-1}{p-1} (\frac{1}{t}- \frac{1}{r} )} \|u\|^{\frac{q-1}{p-1}}_{\L^r(\Omega)},
\end{align*}
where $r' = \frac{n (p-1) t}{ n(q-1)-pst }$. Note that as $t$ converges to $(q-1) \frac{n}{ps}$, $r'$ goes to infinity. Hence, by choosing $t$ close enough to $(q-1) \frac{n}{ps}$, we arrive at $u \in \L^{r'}(\Omega)$ for $r' >(q-1) \frac{n}{ps}$. In conclusion, the proof follows from the previous case. Finally, we claim that if $p_s^* < \frac{n(q-1)}{ps}$ and we start with $u \in \L^{r_1}(\Omega)$ for some $p_s^*\leq r_1 < \frac{n (q-1)}{ps}$, then $u \in \L^{r_2}(\Omega)$ with $\frac{r_2}{r_1} > \alpha>1$, where $\alpha$ depends on $n,s,p,q$, and 
\begin{align}
\label{eq:betterLpestimate}
    \|u\|_{\L^{r_2}(\Omega)} \leq C \lambda^{\frac{1}{p-1}} \|u\|^{\frac{q-1}{p-1}}_{\L^{r_1}(\Omega)},
\end{align}
where $C$ depends on $n,s,p,q,\Omega$. Note that by using the claim finitely many times, we get that $u$ belongs to $\L^{r}(\Omega)$ for some $r \geq \frac{n(q-1)}{ps}$ which, together with previous cases, completes the proof. To prove the claim, we use \cite[Lem. 2.3]{MPSY} as before to obtain \eqref{eq:betterLpestimate} for 
\begin{align*}
    r_2 := \frac{n (p-1) r_1}{ n (q-1)-ps r_1}.
\end{align*}
Then, by $r_1 \geq p_s^*$ and $q < p_s^*$, we deduce 
\begin{align*}
    \frac{r_2}{r_1} \geq \frac{n(p-1)}{n(q-1) - ps p_s^*} > \frac{n(p-1)}{n(p_s^*-1) - ps p_s^*} = 1,
\end{align*}
   which concludes the above claim for $\alpha = \frac{n(p-1)}{n(q-1) - ps p_s^*}.$} 
\end{proof}

\note{\begin{rem}
    The same proof can be applied to the general equation 
\note{\begin{align*}(-\Delta_p)^s u = f(u), \quad & \text{in }  \Omega,\end{align*}}where $f : \R \to \R$ satisfies $ |f(u)| \leq C_1 + C_2 |u|^{q-1}$ for $1<q<p^{\ast}_s$ and positive constants $C_1,C_2$, to derive 
\begin{align*}
    \|u\|_{\L^{\infty}(\Omega)} \leq  g\left(\|u\|_{\L^q(\Omega)}\right),
\end{align*}
where $g: \R \to \R$ is a non-negative function depending on $n,s,p,q,\Omega.$ Moreover, we do not need the Wiener regular property of $\Omega$ in the proof of Proposition \ref{prop:globalbound}.
\end{rem}}

Second, we demonstrate that the first eigenvalue $\Lambda_{p,q}$ is simple for $1<q \leq p$, and the first eigenfunction does not change sign.
\begin{prop}
\label{prop:comparison}
For every $1<q \leq p$, the weak solutions $u \in V^{s,p}_0(\Omega|\R^n)$ of $$(-\Delta_p)^s u = \Lambda_{p,q} |u|^{q-2} u, \quad \text{in } \Omega,$$ satisfying $\|u\|_{\L^q(\Omega)}=1$, are proportional and \note{strictly positive or negative on $\Omega$.} 
\end{prop}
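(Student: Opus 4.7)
The plan is to split the statement into two parts: first, that any such eigenfunction $u$ has constant sign on $\Omega$, and second, that any two positive, $\L^q$-normalized eigenfunctions coincide. The first step leans on Corollary~\ref{cor:key}; the second on a hidden-convexity argument along the $q$-geodesic between two positive eigenfunctions.

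For the sign claim, start by upgrading regularity. Proposition~\ref{prop:globalbound} gives $u \in \L^\infty(\Omega)$, so the right-hand side $\Lambda_{p,q}|u|^{q-2}u$ is bounded; since the boundary datum of $u$ is $0\in C(\R^n)$ and $\partial\Omega$ is Wiener regular, Definition~\ref{def:regularbound} yields $u\in C(\R^n)$, and in particular $|u|\in C(\overline{\Omega}_\delta)$ for every $\delta>0$. The reverse triangle inequality $\bigl||u(x)|-|u(y)|\bigr|\leq |u(x)-u(y)|$ forces
\begin{equation*}
[|u|]^{p}_{V^{s,p}(\R^n)} \leq [u]^{p}_{V^{s,p}(\R^n)} = \Lambda_{p,q}\|u\|_{\L^q(\Omega)}^{q} = \Lambda_{p,q}\bigl\| |u| \bigr\|_{\L^q(\Omega)}^{q},
\end{equation*}
so $|u|$ is also a minimizer of~\eqref{eq:fist-eignevalue} and hence a weak solution of $(-\Delta_p)^s |u|=\Lambda_{p,q}|u|^{q-1}$ in $\Omega$. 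Proposition~\ref{prop:equivalenceweakvisco} promotes this to the viscosity sense, so Corollary~\ref{cor:key}, applied with $g(t)=\Lambda_{p,q}\, t^{q-1}\in C([0,\infty))$ and $g(0)=0$, yields either $|u|\equiv 0$ a.e.\ (excluded by $\|u\|_{\L^q(\Omega)}=1$) or $|u|>0$ in $\Omega$. Hence $u$ has constant sign on $\Omega$.

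For proportionality, let $u,v$ be two such eigenfunctions. By the sign step I may assume $u,v>0$ in $\Omega$. Consider the $q$-geodesic
\begin{equation*}
\zeta_t(x) := \bigl((1-t)\,u(x)^q + t\,v(x)^q\bigr)^{1/q}, \qquad t\in[0,1].
\end{equation*}
Then $\zeta_t\in V^{s,p}_0(\Omega|\R^n)$ with $\|\zeta_t\|_{\L^q(\Omega)}^q=1$, and a pointwise hidden-convexity inequality (valid for $1<q\leq p$; cf.\ \cite{BP,FL}) gives
\begin{equation*}
[\zeta_t]^{p}_{V^{s,p}(\R^n)} \leq (1-t)[u]^{p}_{V^{s,p}(\R^n)} + t\,[v]^{p}_{V^{s,p}(\R^n)} = \Lambda_{p,q}.
\end{equation*}
Combining this with the Rayleigh bound $[\zeta_t]^{p}_{V^{s,p}(\R^n)} \geq \Lambda_{p,q}\|\zeta_t\|_{\L^q(\Omega)}^q = \Lambda_{p,q}$ forces equality, whose pointwise rigidity gives $u/v$ constant in $\Omega$; the normalization then fixes the constant and yields $u=v$.

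The main obstacle is the hidden-convexity inequality together with its equality case in the sub-homogeneous range $q<p$, since the analogous statement in \cite{BP} is written for $q=p$. Pointwise, it reduces to a purely algebraic convexity property of $(a,b,c,d)\mapsto \bigl|((1-t)a^q+tb^q)^{1/q}-((1-t)c^q+td^q)^{1/q}\bigr|^p$ on the non-negative orthant, and integrating gives the displayed inequality; the equality case has to be analyzed pair-by-pair in $(x,y)$ and then propagated to global proportionality on $\Omega$, which is precisely where the positivity of $u$ and $v$ from the first step is essential.
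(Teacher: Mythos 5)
Your proportionality argument is essentially the paper's own: the paper also moves along the curve $(t|u|^q+(1-t)|v|^q)^{1/q}$ and extracts proportionality from the equality case. The one substantive item you leave open --- the pointwise hidden-convexity inequality for $1<q\le p$ together with its equality case, which you yourself flag as ``the main obstacle'' --- is exactly what the paper supplies, and it is short: setting $\chi_t(x)=(t^{1/q}u(x),(1-t)^{1/q}v(x))\in\R^2$, your $\zeta_t$ is $\|\chi_t\|_{\ell^q}$, the reverse triangle inequality in $\ell^q(\R^2)$ gives $|\zeta_t(x)-\zeta_t(y)|\le\|\chi_t(x)-\chi_t(y)\|_{\ell^q}$, and convexity of $\tau\mapsto\tau^{p/q}$ gives $\|\chi_t(x)-\chi_t(y)\|_{\ell^q}^p=(t|u(x)-u(y)|^q+(1-t)|v(x)-v(y)|^q)^{p/q}\le t|u(x)-u(y)|^p+(1-t)|v(x)-v(y)|^p$. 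The equality case you need is then the equality case of the Minkowski inequality in $\ell^q(\R^2)$ with $q>1$: the vectors $\chi_t(x)$ and $\chi_t(y)$ must be nonnegatively proportional for a.e.\ $x,y$, which is precisely $u(x)/v(x)=u(y)/v(y)$. As written, this key inequality is asserted by reference rather than proved, so you should incorporate this two-line derivation.

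For the sign claim you take a longer route than the paper, and it contains a slip. Having established $[|u|]^p_{V^{s,p}(\R^n)}=[u]^p_{V^{s,p}(\R^n)}=\Lambda_{p,q}$, the paper simply observes that equality in $\bigl||u(x)|-|u(y)|\bigr|\le|u(x)-u(y)|$ for a.e.\ $(x,y)$ already forces $u(x)u(y)\ge 0$ a.e., i.e.\ $u$ does not change sign --- no regularity, no Hopf lemma, no Wiener hypothesis. Your detour through Corollary \ref{cor:key} concludes $|u|>0$ in $\Omega$ and then infers constant sign from continuity; but $\Omega$ is only a bounded open set, so without connectedness $|u|>0$ does not preclude $u>0$ on one component and $u<0$ on another. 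Since you already hold the equality $[|u|]=[u]$, the fix is to invoke its rigidity directly. That said, the strict positivity $u,v>0$ you obtain via Corollary \ref{cor:key} (legitimate here, as Wiener regularity is a standing assumption of the section and supplies the continuity the corollary needs) is a reasonable way to make the equality-case propagation in the second step clean, even though the paper gets by without it.
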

\begin{proof}
Assume that $u,v \in V^{s,p}_0(\Omega|\R^n)$ satisfy $\|u\|_{\L^q(\Omega)}=1=  \|v\|_{\L^q(\Omega)}$ and are weak solutions of 
\begin{align}
    \label{eq:weakequations}
(-\Delta_p)^s u = \Lambda_{p,q} |u|^{q-2} u,\, (-\Delta_p)^s v = \Lambda_{p,q} |v|^{q-2} v \quad \text{in } \Omega.    
\end{align}
 Then, by \note{the} triangle inequality,
\begin{align*}
    \iint_{\R^n \times \R^n} \frac{|\,|u(x)|-|u(y)|\,|^p }{|x-y|^{n+ps}} \, \d x \d y  \leq  \iint_{\R^n \times \R^n} \frac{|u(x)-u(y)|^p }{|x-y|^{n+ps}} \, \d x \d y,
\end{align*}
and the equality holds if and only if $u$ does not change sign. Hence, by \eqref{eq:fist-eignevalue}, the equality occurs above, \note{and, without loss of generality, we can assume that $u$ is non-negative on $\Omega$. To prove that $u$ is positive in $\Omega,$ we note that, by Proposition \ref{prop:Wiener} and Proposition \ref{prop:globalbound}, $u \in C(\overline{\Omega})$. In conclusion, by Corollary \ref{cor:key} and Proposition \ref{prop:equivalenceweakvisco}, we derive that $u>0$ in $\Omega.$ Likewise, up to a multiplication with $-1$, we can assume that $v >0$ in $\Omega.$

Now, we show that $u$ and $v$ are proportional.} Define $\chi_t := (t^{\frac{1}{q}} u, (1-t)^{\frac{1}{q}} v)$ and $\|\,\|_{l^q}$ as the $l^q$-norm in $\R^2.$ Since $t \to t^{\frac{p}{q}}$ is a convex function on $\R^+$, we have
\begin{align*}
   \|\chi_t(x) -\chi_t(y)\|^{p}_{l^q} \leq t |u(x)-u(y)|^p + (1-t) |v(x)-v(y)|^p, \quad \text{for } x,y \in \R^n.
\end{align*}
Hence, 
\begin{equation}
\label{eq:inequalities}
\begin{aligned}
    \int_{\R^n} \int_{\R^n} \frac{| \|\chi_t(x)\|_{l^q} - \|\chi_t(y)\|_{l^q}|^p}{|x-y|^{n+ps}} \, \d x \d y &\leq  t \int_{\R^n} \int_{\R^n} \frac{| u(x) - u(y)|^p}{|x-y|^{n+ps}} \, \d x \d y\\& + (1-t) \int_{\R^n} \int_{\R^n} \frac{| v(x) - v(y)|^p}{|x-y|^{n+ps}} \, \d x \d y \leq \Lambda_{p,q},
\end{aligned}
\end{equation}
by \note{the} triangle inequality and \eqref{eq:weakequations}. Also, \note{we have} $$\|\|\chi_t\|_{l^q}\|^q_{\L^q(\Omega)} = t \|u\|^q_{\L^q(\Omega)} + (1-t) \|v\|^q_{\L^q(\Omega)}  = 1.$$ Hence, by \eqref{eq:fist-eignevalue}, the inequalities in \eqref{eq:inequalities} are equalities. In conclusion, we have the equality 
\begin{align*}
       | \|\chi_t(x)\|_{l^q} -\|\chi_t(y)\|_{l^q}|  = \|\chi_t(x) -\chi_t(y)\|_{l^q} \quad \text{for a.e. } x,y \in \R^n
\end{align*}
in the triangle inequality. It follows that $\chi_t(x)= c(x,y) \chi_t(y)$ for a.e. $x,y \in \R^n.$ Hence, \note{we have} $\frac{u(x)}{v(x)} = \frac{u(y)}{v(y)}$ for a.e. $x,y \in \Omega.$

\end{proof}

The next proposition shows that only for the first eigenvalue there exists a non-negative eigenfunction.

\begin{prop}
    \label{prop:signchanginh}
Let $u \in V_0^{s,p}(\Omega|\R^n)$ be a weak solution of $(-\Delta_p)^s u = \lambda |u|^{q-1} u$ where $1<q\leq p$, $\lambda>0$, and $\|u\|_{\L^q(\Omega)}=1$. If $u$ is non-negative in $\Omega$, then $\lambda=\Lambda_{p,q}.$
\end{prop}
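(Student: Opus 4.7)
The plan is to squeeze $\lambda$ between $\Lambda_{p,q}$ and $\Lambda_{p,q}$ by combining the variational characterization of $\Lambda_{p,q}$ with a fractional Picone inequality. Testing the equation $(-\Delta_p)^s u = \lambda u^{q-1}$ against $u$ itself and using $\|u\|_{\L^q(\Omega)} = 1$ gives $[u]_{V^{s,p}(\R^n)}^p = \lambda$, which by \eqref{eq:fist-eignevalue} immediately yields $\Lambda_{p,q} \leq \lambda$. The task therefore reduces to proving the reverse inequality $\lambda \leq \Lambda_{p,q}$.

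Fix a positive first eigenfunction $u_1 \in V_0^{s,p}(\Omega|\R^n)$ normalized so that $\|u_1\|_{\L^q(\Omega)} = 1$ and $(-\Delta_p)^s u_1 = \Lambda_{p,q} u_1^{q-1}$ weakly in $\Omega$; its existence is standard via the direct method, and Proposition \ref{prop:comparison} ensures it does not change sign, so we take $u_1 \geq 0$. By Proposition \ref{prop:globalbound} and the assumed Wiener regularity of $\partial\Omega$ (Definition \ref{def:regularbound}), both $u$ and $u_1$ belong to $C(\R^n)$ and vanish outside $\Omega$. Proposition \ref{prop:equivalenceweakvisco} turns them into viscosity supersolutions of an equation of the form $(-\Delta_p)^s w \geq g(w)$ with $g \in C([0,\infty))$ and $g(0) = 0$, so Corollary \ref{cor:key} yields $u, u_1 > 0$ in $\Omega$. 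Applying the global boundary Harnack inequality (Theorem \ref{thm:boundaryharnack}) to the pair $u, u_1$, after rescaling both functions by constants so that their fractional $p$-Laplacians lie below $1$ (the nonnegativity of the right-hand sides makes the required lower bound trivial), we obtain positive constants $c_1, c_2$ with $c_1 u_1 \leq u \leq c_2 u_1$ in $\Omega$.

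The key analytic step is the fractional Picone inequality of Brasco--Franzina: for $u > 0$ and $v \geq 0$ measurable,
\begin{align*}
|v(x) - v(y)|^p \geq |u(x)-u(y)|^{p-2}(u(x)-u(y))\biggl(\frac{v^p(x)}{u^{p-1}(x)} - \frac{v^p(y)}{u^{p-1}(y)}\biggr).
\end{align*}
The comparability $u \geq c_1 u_1$ shows that $\phi := u_1^p/u^{p-1}$ satisfies $0 \leq \phi \leq c_1^{1-p} u_1$, hence $\phi \in V_0^{s,p}(\Omega|\R^n)$ is an admissible test function. Inserting $\phi$ into the weak formulation of the equation for $u$ and invoking the Picone inequality with $v = u_1$ integrated against the kernel $|x-y|^{-n-ps}$, we arrive at
\begin{align*}
\Lambda_{p,q} = [u_1]_{V^{s,p}(\R^n)}^p \geq \lambda \int_\Omega u_1^p \, u^{q-p} \, \d x.
\end{align*}
For $q = p$ the right-hand side equals $\lambda \|u_1\|_{\L^p(\Omega)}^p = \lambda$, and for $1 < q < p$ H\"older's inequality with exponents $p/q$ and $p/(p-q)$ yields
\begin{align*}
1 = \|u_1\|_{\L^q(\Omega)}^q \leq \biggl(\int_\Omega u_1^p\, u^{q-p} \,\d x\biggr)^{q/p} \|u\|_{\L^q(\Omega)}^{q(p-q)/p} = \biggl(\int_\Omega u_1^p\, u^{q-p}\, \d x\biggr)^{q/p},
\end{align*}
so $\int_\Omega u_1^p u^{q-p} \geq 1$ and again $\Lambda_{p,q} \geq \lambda$, which combined with the opening inequality forces $\lambda = \Lambda_{p,q}$.

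The main obstacle is the rigorous verification that $\phi = u_1^p/u^{p-1}$ is an admissible test function in $V_0^{s,p}(\Omega|\R^n)$ and that the Picone inequality survives integration against the singular kernel -- the ratio is a priori defined only on $\{u > 0\}$, and the integrand is not manifestly absolutely integrable near the diagonal or near $\partial\Omega$. The global boundary Harnack inequality proved in Section \ref{sec:HopfandHarnack} is precisely what resolves this: the uniform bound $u_1/u \leq c_1^{-1}$ up to $\partial\Omega$ turns $\phi$ into a bounded multiple of $u_1$, and a standard truncation together with Fatou's lemma then legitimizes the formal manipulation, reducing the whole argument to the two ingredients already at hand.
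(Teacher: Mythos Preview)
Your argument is correct but follows a genuinely different route from the paper. The paper (following \cite{FP}) uses a hidden-convexity method: it sets $\sigma_t^\epsilon = (t(u+\epsilon)^q + (1-t)(v+\epsilon)^q)^{1/q}$, combines the convexity of $t\mapsto t^{p/q}$ with the convexity of $t\mapsto t^p$ to compare $[\sigma_t^\epsilon]^p$ with the weak formulation tested against $\sigma_t^\epsilon - v_\epsilon$, and then sends $t\to 0$ and $\epsilon\to 0$ to obtain $\lambda\leq\Lambda_{p,q}$. Your route via the discrete Picone inequality is equally valid and arguably more transparent once the comparability $c_1 u_1\leq u\leq c_2 u_1$ is available; it also showcases the boundary Harnack machinery of Section~\ref{sec:HopfandHarnack}. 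The trade-off is that the paper's proof is more self-contained: it needs only the strong maximum principle to get $v>0$ in $\Omega$ and never invokes Wiener regularity, continuity up to $\partial\Omega$, or Theorem~\ref{thm:boundaryharnack}, whereas your argument imports all of these. One small caveat: the pointwise bound $0\leq\phi\leq c_1^{1-p}u_1$ alone does not place $\phi$ in $V_0^{s,p}(\Omega|\R^n)$; the ``standard truncation'' you allude to is really the regularization $\phi_\epsilon=u_1^p/(u+\epsilon)^{p-1}$, which is Lipschitz in $(u_1,u)$ and hence admissible without any appeal to boundary Harnack --- so that ingredient, while convenient, is not actually needed to close the argument.
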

\begin{proof}
    The argument is the same as \cite[Thm. 4.1]{FP}. Let $\lambda>0$ and $u \in V_0^{s,p}(\Omega|\R^n)$ be a non-negative weak solution of $(-\Delta_p)^s u = \Lambda_{p,q} u^{q-1}$, where $\|u\|_{\L^q(\Omega)}=1$. Assume that $v \in V_0^{s,p}(\Omega|\R^n)$ is a non-negative weak solution of $(-\Delta_p)^s v = \lambda v^{q-1}$, satisfying $\|v\|_{\L^q(\Omega)}= 1$. Define the functions $u_{\epsilon} := u + \epsilon, v_{\epsilon} := v+ \epsilon,$ and 
\begin{align*}
    \sigma^{\epsilon}_{t}(x) := \biggl ( t u^q_{\epsilon}(x) + (1-t) v^q_{\epsilon}(x) \biggr )^{\frac{1}{q}}\, \quad \text{for } x \in \Omega, t\in [0,1]. 
\end{align*}
Since $t \to t^{\frac{p}{q}}$ is a convex function on $\R^+$, \note{by the triangle inequality for $\|\,\|_{l^q}$}, it is obtained that
\begin{equation}
\label{eq:1}
\begin{aligned}
   & \int_{\R^n} \int_{\R^n} \frac{|\sigma_t^{\epsilon}(x) - \sigma_t^{\epsilon}(y)|^p}{|x-y|^{n+ps}} \, \d x \d y -  \int_{\R^n} \int_{\R^n} \frac{|v(x) - v(y)|^p}{|x-y|^{n+ps}} \, \d x \d y \\
   & \leq t \biggl( \int_{\R^n} \int_{\R^n} \frac{|u(x) - u(y)|^p}{|x-y|^{n+ps}} \, \d x \d y -  \int_{\R^n} \int_{\R^n} \frac{|v(x) - v(y)|^p}{|x-y|^{n+ps}} \, \d x \d y \biggr) \\
   & \leq t ( \Lambda_{p,q}- \lambda).
\end{aligned}
\end{equation}
Now, by \note{the} convexity of the map, $t \to t^{p}$, we have
\begin{equation}
\label{eq:2}
\begin{aligned}
   & \int_{\R^n} \int_{\R^n} \frac{|\sigma_t^{\epsilon}(x) - \sigma_t^{\epsilon}(y)|^p}{|x-y|^{n+ps}} \, \d x \d y -  \int_{\R^n} \int_{\R^n} \frac{|v(x) - v(y)|^p}{|x-y|^{n+ps}} \, \d x \d y \\ & \geq \note{p} \int_{\R^n} \int_{\R^n} \frac{|v(x) - v(y)|^{p-2}(v(x) - v(y))}{|x-y|^{n+ps}} (\sigma_t^{\epsilon}(x)-\note{\sigma^{\epsilon}_t(y)} - (v(x)-v(y))) \, \d x \d y.
\end{aligned}
\end{equation}
Hence, using $(-\Delta_p)^s v = \lambda v^{q-1} $ weakly in $\Omega$ and the test function $\sigma_{t}^{\epsilon}-v_{\epsilon}$, we get
\begin{equation}
\label{eq:3}
\begin{aligned}
     &\int_{\R^n} \int_{\R^n} \frac{|v(x) - v(y)|^{p-2}(v(x) - v(y))}{|x-y|^{n+ps}} (\sigma_t^{\epsilon}(x)-\note{\sigma^{\epsilon}_t(y)} - (v_{\epsilon}(x)-v_{\epsilon}(y))) \, \d x \d y \\ &=  \int_{\Omega} \lambda v^{q-1}(x) \, (\sigma_t^{\epsilon}(x)-v_{\epsilon}(x)) \, \d x.
\end{aligned}
    \end{equation}
In conclusion, by \eqref{eq:1}, \eqref{eq:2}, and \eqref{eq:3}, we arrive at 
\begin{equation}
\label{eq:4}
   \lambda \note{p} \int_{\Omega} v^{q-1}(x)  \frac{\sigma_t^{\epsilon}(x)-v_{\epsilon}(x)}{t} \, \d x \leq \Lambda_{p,q} - \lambda,
\end{equation}
where we used $v_{\epsilon}(x)- v_{\epsilon}(y) = v(x)-v(y)$ for all $x,y \in \R^n.$ Since $t \to t^{\frac{1}{q}}$ is a concave function of $t \in \R^+$, we have
$$v^{q-1}  \frac{\sigma_t^{\epsilon}-v_{\epsilon}}{t} \geq v^{q-1} (u-v), \quad \text{in } \Omega,$$
and $v^{q-1} (u-v) \in \L^1(\Omega)$ by Proposition \ref{prop:globalbound}. Then, by applying Fatou's lemma to \eqref{eq:4}, it is implied that
\begin{align*}
     \frac{\lambda \note{p}}{q} \int_{\Omega} \biggl(\frac{v(x)}{v_{\epsilon}(x)}\biggr)^{q-1} (u^{q}_{\epsilon}(x)-v^{q}_{\epsilon}(x) ) \leq \lambda \note{p} \, \underset{t \to 0}{\liminf}  \int_{\Omega} v^{q-1}(x)  \frac{\sigma_t^{\epsilon}(x)-v_{\epsilon}(x)}{t} \, \d x \leq \Lambda_{p,q} - \lambda,
\end{align*}
for small enough $\epsilon>0.$ Hence, by the assumption $v>0$ in $\Omega$ and Lebesgue dominated convergence, we deduce
\begin{align*}
    0 = \frac{\lambda \note{p}}{q} \int_{\Omega} u^q(x) - v^q(x) \, \d x \leq \Lambda_{p,q} - \lambda.
\end{align*}
Combining the above inequality with \eqref{eq:fist-eignevalue} implies \note{that} $\lambda = \Lambda_{p,q}.$

\end{proof}

\begin{proof}[Proof of Theorem \ref{thm:isolation}]
Let $\note{\lambda_i >\Lambda_{p,q}}, v_i \in V^{s,p}_0(\Omega|\R^n)$ be a sequence such that 
\begin{align*}
\|v_i\|_{\L^q(\Omega)}&=1,\\
\lim_{i \to \infty} \lambda_i &= \Lambda_{p,q},\\
(-\Delta_p)^s v_i &= \lambda_i |v_i|^{q-2}v_i,
\end{align*}
weakly in $\Omega.$  By Proposition \ref{prop:globalbound}, 
\begin{align}
\label{eq:Linfinitybound}
    \|v_i\|_{\L^{\infty}(\Omega)} \leq C \note{\lambda_i^{\theta} }
\end{align}
for \note{some} positive constants $\theta,C$, \note{which are independent of $i$}. Also, $[v_i]^p_{V^{s,p}(\R^n)} = \lambda_i$. Hence, by Theorem \ref{thm:fractionalsobolev}, \note{\cite[Thm. 2.7]{BLP}}, \note{and passing to a subsequence}, $v_i$ converges strongly in $\Omega$ to $u \in V^{s,p}_0(\Omega|\R^n)$ with respect to \note{the $\L^q$-norm} and $\|u\|_{\L^q(\Omega)}=1$. \note{Then}, 
\begin{align*}
    (-\Delta_p)^s u &= \Lambda_{p,q} |u|^{q-2}u, \quad \text{weakly in } \Omega.
\end{align*}
Now, by Proposition \ref{prop:comparison}, $u>0$ a.e. in $\Omega$ up to multiplication by a constant. By \eqref{eq:Linfinitybound}, Hölder's regularity for the $s$-fractional Laplacian, see \cite[Thm. 5.4]{IMS}, and \note{the} Arz\'ela-Ascoli theorem, $v_i$ converges uniformly to $u$ on compact subsets of $\Omega$ up to a subsequence and multiplication by a constant. Since $\Omega$ has a Wiener regular boundary for the $s$-fractional $p$-Laplacian, we derive $u,v_i, u_{\tor}$ belong to $C(\R^n)$. Now, we want to apply Lemma \ref{thm:boundbelow} to sequences $u_i =u,v_i.$ Since $u>0$, we need to only check that $\lim_{i \to \infty} \lambda_i |v_i(x_i)|^{q-2} v_i(x_i) = 0$ for every sequence $x_i \in \Omega$ such that $\lim_{i \to \infty} x_i \in \partial \Omega.$ To prove this, define \note{$$\Tilde{v}_i := \biggl(\lambda_i \|v_i\|_{\L^{\infty}(\Omega)}^{q-1}\biggr)^{\frac{1}{p-1}}  u_{\tor}.$$} Then, $\Tilde{v_i} = v_i=0$ in $\R^n \setminus \Omega$ and 
\begin{align*}
   (-\Delta_p)^s \Tilde{v}_i = \lambda_i \|v_i\|^{q-1}_{\L^{\infty}(\Omega)},
\end{align*}
weakly in $\Omega$. Hence, $- \Tilde{v}_i \leq v_i \leq \Tilde{v}_i$ in $\Omega$ by comparison principle, see \cite[Prop. 2.10]{IMS}. In particular, by \eqref{eq:Linfinitybound} and $u_{\tor} \in C(\R^n)$, we obtain \begin{align*}\lim_{i \to \infty} \lambda_i |v_i(x_i)|^{q-2} v_i(x_i)  &= \lim_{i \to \infty} \lambda_i |\Tilde{v}_i(x_i)|^{q-2} \Tilde{v}_i(x_i)\\&=\lim_{i \to \infty} \biggl(\lambda_i \|v_i\|_{\L^{\infty}(\Omega)}^{q-1}\biggr)^{\frac{q-1}{p-1}} |u_{\tor}(x_i)|^{q-2} u_{\tor}(x_i)  =0,\end{align*} for every sequence $x_i \in \Omega$ satisfying $\lim_{i \to \infty} x_i \in \partial \Omega$. In conclusion, by Lemma \ref{thm:boundbelow}, we obtain $v_i \geq C u$ in $\Omega$ for a constant $C>0$ and large enough $i$. In particular, $v_i>0$ in $\Omega$ for large enough $i$, which implies that $\lambda_i = \Lambda_{p,q}$ by Proposition \ref{prop:signchanginh}. This derives the desired contradiction.

\end{proof}

\begin{rem}
\label{rem:shortproof}
If $\Omega$ has a $C^{1,1}$ boundary \note{and $p \geq 2$}, then one can simplify the proof of Theorem \ref{thm:isolation} by using \cite[Thm. 1.1]{IMS1}. Indeed, taking a sequence of functions $v_i$ as in the above proof, we have, by \cite[Thm. 1.1]{IMS1}, $\frac{v_i}{\dist(x,\partial \Omega)^s}$ 
is uniformly bounded, and $$\biggl\|\frac{v_i}{\dist(x,\partial \Omega)^s}\biggr\|_{C^{\alpha}(\bar \Omega)} \leq C$$ for some positive constants $C, \alpha.$ By the same argument as proof of Theorem \ref{thm:isolation}, up to multiplication by \note{a} constant and \note{passing to a} subsequence, we derive \note{that} $v_i$ is sign-changing on $\Omega$ and converges pointwise to a non-negative function $u$. Hence, by the \note{Arz\'ela-Ascoli} theorem, $\frac{v_i}{\dist(x,\partial \Omega)^s}$ converges uniformly to $\frac{u}{\dist(x,\partial \Omega)^s}$ up to a subsequence. Since $\frac{v_i}{\dist(x,\partial \Omega)^s}$ is also continuous and sign-changing on $\Omega$, we derive \note{that} $\frac{u}{\dist(x,\partial \Omega)^s}$ goes to zero at a boundary point. This contradicts Corollary \ref{cor:key} and \cite[Lem. 2.3]{IMS1}, since $u \geq C_1 u_{\tor}(x) \geq C_2  \dist(x,\partial \Omega)^s$ for $x \in \Omega$, where $C_1,C_2$ are positive constants.

\end{rem}

\section{acknowledgement}

\note{The author wants to thank
Alireza Tavakoli for many motivating discussions and comments on this work.}

\def\cprime{$'$} \def\cprime{$'$} \def\cprime{$'$}

\end{document}